\newenvironment{enumerate2}{\begin{enumerate}[label=\textup{(\alph*)}]}
{\end{enumerate}}
\date{\today}
\newtheorem{Thm}{Theorem}[section]
\newtheorem{Cor}[Thm]{Corollary}
\newtheorem{Lem}[Thm]{Lemma}
\newtheorem{Prop}[Thm]{Proposition}
\newtheorem{Def-Lem}[Thm]{Definition-Lemma}
\newtheorem{Def-Prop}[Thm]{Definition-Proposition}
\theoremstyle{definition}
\newtheorem{Def}[Thm]{Definition}
\newcommand{\calA}{\mathcal{A}}
\newcommand{\calF}{\mathcal{F}}
\newcommand{\calO}{\mathcal{O}}
\newcommand{\calS}{\mathcal{S}}
\newcommand{\calU}{\mathcal{U}}
\newcommand{\calV}{\mathcal{V}}
\newcommand{\calX}{\mathcal{X}}
\newcommand{\calY}{\mathcal{Y}}
\newcommand{\calZ}{\mathcal{Z}}
\newcommand{\ovcalF}{\overline{\calF}}
\newcommand{\sfD}{\mathsf{D}}
\newcommand{\sfK}{\mathsf{K}}
\newcommand{\rmb}{\mathrm{b}}
\newcommand{\Z}{\mathbb{Z}}
\DeclareMathOperator{\Hom}{\mathsf{Hom}}
\DeclareMathOperator{\End}{\mathsf{End}}
\DeclareMathOperator{\Ext}{\mathsf{Ext}}
\DeclareMathOperator{\Coker}{\mathsf{Coker}}
\DeclareMathOperator{\module}{\mathsf{mod}} \renewcommand{\mod}{\module}
\DeclareMathOperator{\rep}{\mathsf{rep}}
\DeclareMathOperator{\proj}{\mathsf{proj}}
\DeclareMathOperator{\add}{\mathsf{add}}
\DeclareMathOperator{\brick}{\mathsf{brick}}
\DeclareMathOperator{\sbrick}{\mathsf{sbrick}}
\DeclareMathOperator{\sfdim}{\mathsf{dim}}
\renewcommand{\dim}{\sfdim}
\DeclareMathOperator{\dimv}{\underline{\mathsf{dim}}}
\DeclareMathOperator{\GL}{\mathsf{GL}}
\DeclareMathOperator{\Irr}{\mathsf{Irr}}
\renewcommand{\epsilon}{\varepsilon}
\renewcommand{\Gamma}{\varGamma}
\renewcommand{\Lambda}{\varLambda}
\renewcommand{\Phi}{\varPhi}
\renewcommand{\phi}{\varphi}
\numberwithin{equation}{section}
\begin{document}
\title{Maximal finite semibricks consist only of open bricks}

\author{Sota Asai} 
\address{Sota Asai: Graduate School of Mathematical Sciences,
the University of Tokyo,  
3-8-1 Komaba, Meguro-ku, Tokyo-to, 153-8914, Japan}
\email{sotaasai@g.ecc.u-tokyo.ac.jp}

\begin{abstract}
A semibrick is a set of modules satisfying Schur's Lemma,
and it is said to be maximal 
if it is not properly contained in another semibrick.
For any finite dimensional algebra $\Lambda$ 
over an algebracally closed field $K$,
we prove that any maximal finite semibrick $\calS$ 
consists only of open bricks $B$, that is, 
bricks whose orbit closures $\overline{\calO_B}$ 
are irreducible components in the representation schemes.
\end{abstract}

\maketitle 

\section{Our results}\label{Sec_result}

In this short paper, we discuss maximal finite semibricks 
in the module category $\mod \Lambda$
of a finite dimensional algebra $\Lambda$ 
over an algebraically closed field $K$.
Bricks, semibricks and related notions are defined as follows:

\begin{itemize}
\item
A $\Lambda$-module $B \in \mod \Lambda$ is called a \emph{brick}
if the endomorphism ring $\End_\Lambda(B)$ is isomorphic to $K$.
We write $\brick \Lambda$ for the set of isoclasses of bricks 
in $\mod \Lambda$.
\item
\cite{A-semi}
A subset $\calS \subset \brick \Lambda$ is called a \emph{semibrick}
if $\Hom_\Lambda(B,B')=0$ for any $B \ne B' \in \calS$.
We write $\sbrick \Lambda$ for the set of semibricks in $\mod \Lambda$.
\item
A semibrick $\calS \in \sbrick \Lambda$ is said to be \emph{finite} 
if $\calS$ is a finite set.
\item
A semibrick $\calS \in \sbrick \Lambda$ is said to be \emph{maximal} 
if there exists no semibrick $\calS'$ 
such that $\calS \subsetneq \calS'$.
\end{itemize}

It is well-known that there exist a finite quiver $Q$ 
and an admissible ideal $I \subset KQ$ such that
$\Lambda$ is Morita equivalent to $KQ/I$,
so we may assume that $\Lambda=KQ/I$.

To each $d=(d_i)_{i \in Q_0} \in (\Z_{\ge 0})^{Q_0}$,
we associate a scheme $\rep(\Lambda,d)$ 
called the \emph{representation scheme} (see Definition \ref{Def_rep_var}),
which can be seen as the set of all $\Lambda$-modules $M$ 
whose dimension vectors $\dimv M$ are $d$.
We write $\Irr(\Lambda,d)$ for the set of irreducible components 
of $\rep(\Lambda,d)$,
and set $\Irr(\Lambda)$ as the disjoint union of all $\Irr(\Lambda,d)$
for $d \in (\Z_{\ge 0})^{Q_0}$.

The group $\GL(d)=\prod_{i \in Q_0}\GL(d_i)$ acts naturally 
on $\rep(\Lambda,d)$,
and for each $M \in \rep(\Lambda,d)$, its orbit $\calO_M$ coincides with
the subset 
$\{X \in \rep(\Lambda,d) \mid \text{$X \simeq M$ as $\Lambda$-modules}\}$.
Then every irreducible component of $\rep(\Lambda,d)$ is 
a union of orbits, since $\GL(d)$ is connected.

For a brick $B$,
we say that a brick $B$ is an \emph{open brick}
if $\calO_B$ is open dense in some $\calZ \in \Irr(\Lambda)$;
otherwise, $B$ is called a \emph{non-open brick}.
The following is the main result of this paper.

\begin{Thm}\label{Thm_open}
Let $\Lambda$ be a finite dimensional algebra 
over an algebracally closed field $K$.
Assume that $\calS \in \sbrick \Lambda$ is a maximal finite semibrick.
Then every brick in $\calS$ is an open brick.
\end{Thm}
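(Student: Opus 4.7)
I would prove the contrapositive: assuming $\calS$ is a finite semibrick containing a non-open brick $B$, construct a brick $B' \notin \calS$ such that $\calS \cup \{B'\}$ is still a semibrick, contradicting maximality. Since $B$ is non-open, one can fix $\calZ \in \Irr(\Lambda, \dimv B)$ with $\overline{\calO_B} \subsetneq \calZ$, and the candidate $B'$ will be a suitably generic point of $\calZ$. First, upper-semi-continuity of $\dim \End$ makes the brick locus open in $\rep(\Lambda,\dimv B)$, and it meets $\calZ$ because it contains $\calO_B$, so bricks are dense in the irreducible $\calZ$. Next, upper-semi-continuity of $\dim \Hom$ makes, for each $B'' \in \calS \setminus \{B\}$, the subsets $\{X \in \calZ : \Hom(B'', X) = 0\}$ and $\{X \in \calZ : \Hom(X, B'') = 0\}$ open, and they contain $B$ by the semibrick relations, hence are dense in $\calZ$. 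Taking the finite intersection yields a dense open $U \subseteq \calZ$ of bricks that are Hom-orthogonal to every element of $\calS \setminus \{B\}$.

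The main technical step is to verify that $\Hom(B, X) = \Hom(X, B) = 0$ also holds on a dense open subset of $\calZ$. This does not follow from upper-semi-continuity alone, since $\dim \Hom(B, B) = 1$ only bounds the generic value of $\dim \Hom(B, -)$ by $1$, not $0$. I would argue by contradiction: suppose $\dim \Hom(B, X) \geq 1$ for every $X \in \calZ$. The coherent sheaf on $\calZ$ whose fibre at $X$ is $\Hom(B, X)$ has positive constant rank on a dense open and is locally free there, so a local basis section provides nonzero maps $\phi_X : B \to X$ for all $X$ in a dense open $V \subseteq \calZ$. A constructibility argument on the morphism $X \mapsto \Image(\phi_X)$, viewed as taking values in a quiver Grassmannian attached to $B$, shows that on a further dense open $V' \subseteq V$ the module $\Image(\phi_X)$ is isomorphic to a fixed module $C$. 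For $X \in V' \setminus \overline{\calO_B}$, which is nonempty since $B$ is non-open, a dimension-vector count forces $C$ to be a proper nonzero quotient of $B$ (otherwise $\phi_X$ would be an isomorphism, giving $X \simeq B$). Then $\Hom(C, X) \geq 1$ on $V'$, hence on all of $\calZ$ by upper-semi-continuity, and in particular $\Hom(C, B) \geq 1$ yields a nonzero $\psi : C \to B$. Composing with the quotient $q : B \twoheadrightarrow C$ gives a nonzero endomorphism $\psi \circ q$ of the brick $B$, which must equal $\lambda \cdot \mathrm{id}_B$ with $\lambda \neq 0$; this forces $q$ to be injective, contradicting $\dimv C < \dimv B$. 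The other direction $\Hom(X, B) = 0$ is handled symmetrically using a proper nonzero submodule $\iota : C \hookrightarrow B$: one obtains a nonzero $\rho : B \to C$, and then $\iota \circ \rho$ cannot equal a nonzero scalar because its image lies in $\iota(C) \subsetneq B$, so it must vanish, forcing $\rho = 0$ since $\iota$ is injective.

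Assembling the pieces, $U$ intersected with the two further dense opens from the previous step, and then with the complement of the finite union of orbit closures $\overline{\calO_{B''}}$ for $B'' \in \calS$ with $\dimv B'' = \dimv B$ (each a proper closed subset of $\calZ$ because $\dim \calO_{B''} = \dim \GL(\dimv B) - 1 < \dim \calZ$), is still a nonempty dense open. Any brick $B'$ chosen from it is Hom-orthogonal to $\calS$ and not isomorphic to any element of $\calS$, so $\calS \cup \{B'\}$ is a strictly larger semibrick, contradicting maximality. The main obstacle is precisely the technical step producing the fixed sub- or quotient module $C$ of $B$; the crucial interplay is between the sheaf/constructibility machinery on the irreducible component $\calZ$ and the brick property of $B$, which forces any nonzero endomorphism to be a scalar and thus triggers the required contradiction.
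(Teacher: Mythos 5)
Your overall strategy is the same as the paper's: pass to the contrapositive, fix $\calZ\in\Irr(\Lambda)$ with $\overline{\calO_B}\subsetneq\calZ$, and produce the new brick $B'$ as a generic point of $\calZ$, using upper semicontinuity to make the brick locus and the Hom-vanishing loci against $\calS\setminus\{B\}$ open dense (this is exactly the reduction in Proposition \ref{Prop_perp} and Theorem \ref{Thm_main}; your final detour through the orbit closures $\overline{\calO_{B''}}$ is not needed, since Hom-orthogonality to all of $\calS$ already rules out $B'\simeq B''$). The crucial point, as you correctly identify, is the analogue of Proposition \ref{Prop_GLFS}: if $B$ is non-open then $\calZ\cap{}^\perp B$ and $\calZ\cap B^\perp$ are dense. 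However, your proof of this step has a genuine gap at the claim that ``on a further dense open $V'$ the module $\Image(\phi_X)$ is isomorphic to a fixed module $C$.'' Constructibility does not give this: the images are quotients of $B$ by kernels varying in a quiver Grassmannian of $B$, and such a family can hit infinitely many isomorphism classes; the strata where the quotient lies in a fixed class are constructible, but there may be no dense stratum (and a countability argument is unavailable, both because the number of classes can be uncountable and because $K$ is only assumed algebraically closed, possibly countable). Without a single fixed $C$ embedding into all $X$ in a dense subset, the semicontinuity transfer ``$\Hom_\Lambda(C,X)\neq 0$ on a dense set, hence $\Hom_\Lambda(C,B)\neq 0$'' breaks down, and with it the brick contradiction.

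The repair is essentially the paper's argument (Lemma \ref{Lem_Bongartz} and Proposition \ref{Prop_GLFS}), and it avoids any control of images or kernels: if, say, $\Hom_\Lambda(X,B)\neq 0$ for every $X\in\calZ$, then since $B\in\calZ$ and $\dim_K\Hom_\Lambda(B,B)=1$, the generic value of $\dim_K\Hom_\Lambda(-,B)$ on $\calZ$ is exactly $1$, so over the dense open stratum the Hom-spaces form a line bundle whose projection is an open map (Bongartz); the locus of pairs $(X,f)$ with $f$ an isomorphism is open in the total space and nonempty because it contains $(B,1_B)$, so its image $\calO_B$ is open dense in $\calZ$, contradicting non-openness. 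Equivalently, in your language: along your local basis section $\phi_X$, maximality of rank (injectivity, resp.\ surjectivity) is an open condition in $X$ and holds at $X=B$, hence on a dense open, and equality of dimension vectors then forces $\phi_X$ to be an isomorphism, again giving density of $\calO_B$. The essential input you were missing is to exploit that $B$ itself lies in $\calZ$ with $\dim_K\End_\Lambda(B)=1$, which pins the generic Hom-dimension at $1$ and makes ``$\phi_X$ is an isomorphism'' an open nonempty condition, rather than trying to normalize the generic image up to isomorphism.
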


Theorem \ref{Thm_open} is a direct consequence 
of the following general result,
which is \emph{an extension theorem of semibricks}.

\begin{Thm}\label{Thm_main}
Let $\Lambda$ be a finite dimensional algebra 
over an algebracally closed field $K$,
and $\calS \in \sbrick \Lambda$ be a finite semibrick.
If $B \in \calS$ and $\calZ \in \Irr(\Lambda)$ satisfy 
$\overline{\calO_B} \subsetneq \calZ$,
then there exists some brick $B'$ satisfying
\begin{align*}
\calS \sqcup \{B'\} \in \sbrick \Lambda, 
\quad B' \in \calZ.
\end{align*}
\end{Thm}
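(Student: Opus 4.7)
The plan is to exhibit $B'$ as a generic point of $\calZ$ subject to a finite collection of open semi-continuity conditions. For each $X \in \calS$, set
\[V_X := \{M \in \calZ : \Hom_\Lambda(X, M) = 0 = \Hom_\Lambda(M, X)\},\]
and set $V_{\brick} := \{M \in \calZ : \End_\Lambda(M) \simeq K\}$. Upper semi-continuity of $\dim_K \Hom_\Lambda$ in families makes each $V_X$ and $V_{\brick}$ open in $\calZ$. Any $B' \in V_{\brick} \cap \bigcap_{X \in \calS} V_X$ is then a brick Hom-orthogonal to every element of $\calS$; in particular $B' \not\simeq X$ for all $X \in \calS$, so $\calS \sqcup \{B'\} \in \sbrick \Lambda$. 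Since $\calZ$ is irreducible and only finitely many opens are involved, it is enough to prove that each one is nonempty.

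The straightforward cases follow from the hypotheses: $B \in V_{\brick}$, and for $X \in \calS \setminus \{B\}$ the semibrick property of $\calS$ gives $B \in V_X$. The essential step is to show $V_B \ne \emptyset$, which further reduces to proving both
\[U_1 := \{M \in \calZ : \Hom_\Lambda(B, M) = 0\}, \qquad U_2 := \{M \in \calZ : \Hom_\Lambda(M, B) = 0\}\]
are nonempty. The two arguments are dual (passing to $\Lambda^{\mathrm{op}}$), so I concentrate on $U_1$.

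Suppose for contradiction that $\dim_K \Hom_\Lambda(B, M) \geq 1$ for every $M \in \calZ$. Upper semi-continuity together with $\dim_K \Hom_\Lambda(B, B) = 1$ produces a nonempty open $V \subseteq \calZ$ containing $B$ on which $\dim_K \Hom_\Lambda(B, M) = 1$. Interpreting the relative Hom as a coherent sheaf on $V$ shows it is a line bundle; shrinking $V$ around $B$ and choosing a nowhere-vanishing section yields a family of nonzero $\Lambda$-linear maps $\phi_M : B \to M$. The $K$-linear rank of $\phi_M$ is lower semi-continuous in $M$, and at $M = B$ the map $\phi_B \in \End_\Lambda(B) \simeq K$ is a nonzero scalar, hence an isomorphism of $K$-rank $\dim_K B$. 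Therefore the locus $W \subseteq V$ on which $\phi_M$ attains the maximal rank $\dim_K B$ is a nonempty open subset of $\calZ$, and for $M \in W$ the map $\phi_M$ is an isomorphism of $\Lambda$-modules, so $W \subseteq \calO_B$. Irreducibility of $\calZ$ then forces $\overline{\calO_B} = \calZ$, contradicting $\overline{\calO_B} \subsetneq \calZ$.

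The main obstacle is this last paragraph, specifically passing from the scalar-ambiguous family $\phi_M$ to a well-defined rank function and cleanly applying rank semi-continuity; this is handled by viewing the relative Hom as a line bundle, as above. Once $U_1, U_2 \ne \emptyset$, each is dense in the irreducible $\calZ$, so the finite intersection $V_{\brick} \cap U_1 \cap U_2 \cap \bigcap_{X \in \calS \setminus \{B\}} V_X$ is a nonempty open subset of $\calZ$, and any point in it serves as the desired brick $B'$.
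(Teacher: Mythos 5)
Your proposal is correct and follows essentially the same route as the paper: the paper also takes $B'$ generic in the intersection of the open brick locus and the Hom-vanishing loci (Lemma \ref{Lem_general} plus Proposition \ref{Prop_perp}), and your key step --- that non-density of $\calO_B$ forces the loci $\{M \in \calZ : \Hom_\Lambda(B,M)=0\}$ and $\{M \in \calZ : \Hom_\Lambda(M,B)=0\}$ to be nonempty --- is exactly Proposition \ref{Prop_GLFS}, proved there with the same mechanism (the rank-one Hom bundle over the locus where $\dim_K\Hom=1$, via Bongartz's vector-bundle lemma and openness of the isomorphism locus, which is just another packaging of your nowhere-vanishing section plus rank semicontinuity).
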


Applying Theorem \ref{Thm_main} repeatedly, we get the following.

\begin{Cor}\label{Cor_infin}
Let $\Lambda$ be a finite dimensional algebra 
over an algebracally closed field $K$,
and $\calS \in \sbrick \Lambda$ be a finite semibrick.
Let $\calZ_1,\ldots,\calZ_m \in \Irr(\Lambda)$ with $m \ge 1$
satisfy that,
for each $i \in \{1,\ldots,m\}$, there exists a non-open brick $B \in \calS$
such that $B \in \calZ_i$.
Then there exist infinite semibricks $\calS_1,\ldots,\calS_m$ satisfying
\begin{align*}
\calS \sqcup \calS_1 \sqcup \cdots \sqcup \calS_m 
\in \sbrick \Lambda, \quad 
\calS_i \subset \calZ_i \quad (i \in \{1,\ldots,m\}).
\end{align*}
\end{Cor}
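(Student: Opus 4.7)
The plan is to iterate Theorem \ref{Thm_main} indefinitely, using a diagonal enumeration to interleave the extensions for the different components $\calZ_1, \ldots, \calZ_m$. A diagonal enumeration is essential because Theorem \ref{Thm_main} requires a \emph{finite} input semibrick; completing $\calS_1$ in full before starting $\calS_2$ would leave us with an infinite intermediate semibrick, blocking further applications of the theorem.

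The first ingredient is that, for each $i$, a prescribed non-open brick $B_i \in \calS$ with $B_i \in \calZ_i$ automatically satisfies $\overline{\calO_{B_i}} \subsetneq \calZ_i$. Indeed, $\calO_{B_i} \subseteq \calZ_i$ and $\calZ_i$ is closed, so $\overline{\calO_{B_i}} \subseteq \calZ_i$; if equality held, then the orbit $\calO_{B_i}$---which is locally closed under the $\GL(d)$-action and hence open in its own closure---would be open dense in the irreducible component $\calZ_i$, contradicting the hypothesis that $B_i$ is non-open.

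For the construction, fix a bijection $n \mapsto (k_n, i_n)$ from $\N$ to $\N \times \{1, \ldots, m\}$ such that each $i \in \{1, \ldots, m\}$ appears as $i_n$ for infinitely many $n$. Set $\calT_0 := \calS$, and recursively build a strictly increasing chain $\calT_0 \subsetneq \calT_1 \subsetneq \cdots$ of finite semibricks by applying Theorem \ref{Thm_main} at step $n$ to the finite semibrick $\calT_{n-1}$, the distinguished brick $B_{i_n} \in \calS \subseteq \calT_{n-1}$, and the component $\calZ_{i_n}$; the hypothesis $\overline{\calO_{B_{i_n}}} \subsetneq \calZ_{i_n}$ holds by the previous paragraph and depends only on $B_{i_n}$ and $\calZ_{i_n}$, not on the current $\calT_{n-1}$. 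The theorem yields a new brick $B'_n \in \calZ_{i_n}$ with $\calT_n := \calT_{n-1} \sqcup \{B'_n\} \in \sbrick \Lambda$. Setting $\calS_i := \{B'_n : i_n = i\}$, each $\calS_i$ is infinite, contained in $\calZ_i$, and the family $\calS, \calS_1, \ldots, \calS_m$ is pairwise disjoint by construction; the union $\calS \sqcup \calS_1 \sqcup \cdots \sqcup \calS_m = \bigcup_n \calT_n$ is a semibrick because the pair-wise Hom-vanishing condition is inherited from the finite layers $\calT_n$. The only real obstacle is arranging the interleaving correctly; once the enumeration is set up the rest is a bookkeeping exercise driven by Theorem \ref{Thm_main}.
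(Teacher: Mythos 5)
Your proposal is correct and follows exactly the route the paper intends: the paper derives the corollary simply by ``applying Theorem \ref{Thm_main} repeatedly,'' and your argument fills in the same iteration, together with the (correct) observations that a non-open brick $B \in \calZ_i$ automatically satisfies $\overline{\calO_B} \subsetneq \calZ_i$ and that the interleaved finite semibricks $\calT_n$ keep Theorem \ref{Thm_main} applicable at every step. The diagonal bookkeeping and the finite-character argument for the infinite union are exactly the details one needs, so there is nothing to add.
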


The next section is devoted to showing Theorem \ref{Thm_main}.

In Section \ref{Sec_old}, we prove
a weaker version of Proposition \ref{Prop_perp} for path algebras
(Proposition \ref{Prop_perp_weak})
by using semi-invariants as a record, 
which was our first proof of Theorem \ref{Thm_open}.

\section{Proofs of main results}

\subsection{Preliminaries}

Throughout this paper, 
$K$ is an algebraically closed field,
and $\Lambda=KQ/I$ is a finite dimensional algebra over $K$
with $Q=(Q_0,Q_1)$ a finite quiver and $I$ an admissible ideal of $KQ$.
All finite dimensional algebras over $K$ are described in this way,
up to Morita equivalences.

For each arrow $\alpha \colon i \to j$ in $Q$,
we set $s(\alpha):=i$ and $t(\alpha):=j$,
so $s(\alpha)$ and $t(\alpha)$ are the source and the target of $\alpha$.
The composite of two arrows $\alpha \colon i \to j$ and $\beta \colon j \to k$
are denoted by $\alpha \beta$.

For any module $M \in \mod \Lambda$, 
the element $\dimv M:=(\dim_K Me_i)_{i \in Q_0} \in (\Z_{\ge 0})^{Q_0}$ 
is called the \emph{dimension vector} of $M$,
where $e_i \in \Lambda$ is the idempotent corresponding to each $i \in Q_0$.

Now we recall the definition and some properties of 
representation schemes (also known as representation varieties) 
necessary in this paper.
See \cite{CB3} for details.

\begin{Def}\label{Def_rep_var}
Let $d=(d_i)_{i \in Q_0} \in (\Z_{\ge 0})^{Q_0}$.
Then we define the \emph{representation schemes} 
$\rep(Q,d)$ and $\rep(\Lambda,d)$ of $Q$ and $\Lambda$ for $d$ by  
\begin{align*}
\rep(Q,d)&:=\prod_{\alpha \in Q_1} 
\Hom_K(K^{d_{s(\alpha)}},K^{d_{t(\alpha)}}), \\
\rep(\Lambda,d)&:=
\left\{ (\phi_\alpha)_{\alpha \in Q_1} \in \rep(Q,d) 
\mid \text{$(\phi_\alpha)_{\alpha \in Q_1}$ 
satisfies the relations corresponding to $I$}
\right\}.
\end{align*}
Both are algebraic schemes equipped with the Zariski topology.
We set $\Irr(\Lambda,d)$ as the set of all irreducible components
of $\rep(\Lambda,d)$,
and $\Irr(\Lambda)$ as the disjoint union 
$\bigsqcup_{d \in (\Z_{\ge 0})^{Q_0}} \rep(\Lambda,d)$.
\end{Def}

For $d=(d_i)_{i \in Q_0} \in (\Z_{\ge 0})^{Q_0}$,
the representation scheme $\rep(\Lambda,d)$ admits the action of 
$\GL(d):=\prod_{i \in Q_0}\GL(d_i)$ given by 
\begin{align*}
g \cdot M:=(g_{t(\alpha)} \phi_\alpha g_{s(\alpha)}^{-1})_{\alpha \in Q_1}
\end{align*}
for any $M=(\phi_\alpha)_{\alpha \in Q_1} \in \rep(\Lambda,d)$
and $g=(g_i)_{i \in Q_0}$.

We set $\calO_M$ as the orbit of each $M \in \rep(\Lambda,d)$ 
with respect to $\GL(d)$.
Then we have 
\begin{align*}
\calO_M=\{X \in \rep(Q,d) \mid 
\text{$X \simeq M$ as $\Lambda$-modules} \}.
\end{align*}
Since $\GL(d)$ is a connected algebraic group,
each orbit $\calO_M$ is irreducible, 
so every irreducible component of $\rep(\Lambda,d)$ is a union of orbits.
In other words, if $\calZ \in \Irr(\Lambda)$ and $M \in \calZ$,
then the orbit $\calO_M$ and its closure $\overline{\calO_M}$
are contained in $\calZ$.
Moreover $\calO_M$ is locally closed in $\rep(\Lambda,d)$,
so $\calO_M$ is an open dense subset of $\overline{\calO_M}$.

The next notion is the point of our results and arguments.

\begin{Def}
Let $B \in \brick \Lambda$.
Then we say that $B$ is an \emph{open brick}
if  $\calO_B$ is an open dense subset 
of some $\calZ \in \Irr(\Lambda)$;
or equivalently, $\overline{\calO_B} \in \Irr(\Lambda)$ holds.
Otherwise, $B$ is called a \emph{non-open brick}.
\end{Def}

In particular, if $B \in \brick \Lambda$ satisfies 
$B \in \calZ_1 \cap \calZ_2$ 
for some distinct $\calZ_1,\calZ_2 \in \Irr(\Lambda)$,
then $B$ cannot be an open brick, because its orbit $\calO_B$
is contained in $\calZ_1 \cap \calZ_2$, which is a proper closed subset of
$\calZ_1$ and $\calZ_2$.

Typical examples of open bricks are labeling bricks 
between two functorially finite torsion classes in $\mod \Lambda$
\cite[Theorem 6.1]{MP3}. 
However, we do not use this fact in this paper.

\subsection{Proof of Theorem \ref{Thm_main}}

For any module $M \in \mod \Lambda$, we set two subcategories
\begin{align*}
{^\perp M}:=\{X \in \mod \Lambda \mid \Hom_\Lambda(X,M)=0\}, \quad
M^\perp:=\{X \in \mod \Lambda \mid \Hom_\Lambda(M,X)=0\}.
\end{align*}
From upper semicontinuity, we get the following properties.

\begin{Lem}\label{Lem_general}
Let $\calZ \in \Irr(\Lambda)$, $M \in \mod \Lambda$ and $t \in \Z_{\ge 0}$.
Then the following sets are open in $\calZ$:
\begin{align*}
\calZ \cap \brick \Lambda 
&:=\{X \in \calZ \mid X \in \brick \Lambda \}, \\
\calZ_{M,\le t}
&:=\{X \in \calZ \mid \dim_K \Hom_\Lambda(X,M) \le t\}, &
\calZ \cap {^\perp M}&:=
\{X \in \calZ \mid X \in {^\perp M}\}=\calZ_{M,\le 0}, \\
\calZ^{M,\le t}
&:=\{X \in \calZ \mid \dim_K \Hom_\Lambda(M,X) \le t\}, &
\calZ \cap M^\perp&:=\{X \in \calZ \mid X \in {M^\perp}\}=\calZ^{M,\le 0}.&
\end{align*} 
Thus $\calZ_{M,t}:=\calZ_{M,\le t} \setminus \calZ_{M,\le t-1}$ and
$\calZ^{M,t}:=\calZ^{M,\le t} \setminus \calZ^{M,\le t-1}$ 
are locally closed subsets of $\calZ$,
where we set $\calZ_{M,-1}=\calZ^{M,-1}:=\emptyset$.
\end{Lem}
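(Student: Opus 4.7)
The plan is to deduce every assertion from the classical upper semicontinuity of Hom dimensions on representation schemes, which in turn reduces to the basic fact that if a family of linear maps between two fixed finite-dimensional vector spaces has matrix entries depending polynomially on a parameter, then the rank is lower semicontinuous and the kernel dimension is upper semicontinuous in that parameter. This is the standard machinery of \cite{CB3}.

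First, for $\calZ_{M,\le t}$, I would fix $d \in (\Z_{\ge 0})^{Q_0}$ with $\calZ \subset \rep(\Lambda,d)$ and consider the morphism of trivial vector bundles over $\rep(\Lambda,d)$
\begin{align*}
L^M \colon \rep(\Lambda,d) \times \prod_{i \in Q_0} \Hom_K(K^{d_i}, Me_i)
\to \rep(\Lambda,d) \times \prod_{\alpha \in Q_1} \Hom_K(K^{d_{s(\alpha)}}, Me_{t(\alpha)})
\end{align*}
sending $(X, (f_i)_{i \in Q_0})$ with $X=(\phi_\alpha)_{\alpha \in Q_1}$ to the pair $(X, (f_{t(\alpha)}\phi_\alpha - M_\alpha f_{s(\alpha)})_{\alpha \in Q_1})$. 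The fiber of $\Ker L^M$ over $X$ is by definition $\Hom_\Lambda(X,M)$, and the matrix entries of $L^M$ in the second factor are linear in the coordinates of $X$. Hence $\dim_K \Hom_\Lambda(X,M)$ is upper semicontinuous in $X \in \rep(\Lambda,d)$, and intersecting with $\calZ$ yields the openness of $\calZ_{M,\le t}$. The argument for $\calZ^{M,\le t}$ is completely parallel, with the roles of $X$ and $M$ swapped.

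For $\calZ \cap \brick \Lambda$, I would apply the same construction with $M$ replaced by the varying argument $X$ itself, that is, consider the bundle map
\begin{align*}
\rep(\Lambda,d) \times \prod_{i \in Q_0} \End_K(K^{d_i})
\to \rep(\Lambda,d) \times \prod_{\alpha \in Q_1} \Hom_K(K^{d_{s(\alpha)}}, K^{d_{t(\alpha)}})
\end{align*}
sending $(X, (f_i))$ to $(X, (f_{t(\alpha)}\phi_\alpha - \phi_\alpha f_{s(\alpha)})_{\alpha \in Q_1})$. The fiber of the kernel over $X$ is $\End_\Lambda(X)$, and the matrix entries are now bilinear (still polynomial) in the data. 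The same upper semicontinuity shows that $\{X \in \rep(\Lambda,d) \mid \dim_K \End_\Lambda(X) \le 1\}$ is open; since $\dim_K \End_\Lambda(X) \ge 1$ always (via scalars) and $X$ is a brick iff equality holds, this open set equals $\{X \in \rep(\Lambda,d) \mid X \in \brick \Lambda\}$, and intersecting with $\calZ$ proves the claim.

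The final assertion on $\calZ_{M,t}$ and $\calZ^{M,t}$ is then immediate: each is an open set minus a smaller open set, hence locally closed. I do not foresee any serious obstacle; the only mildly subtle point is the brick case, where the variable $X$ appears on both sides of the linear family, but polynomial dependence of the matrix entries is all that the semicontinuity argument requires.
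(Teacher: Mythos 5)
Your proof is correct and follows exactly the route the paper intends: the paper states this lemma with no written proof, simply invoking upper semicontinuity of $\dim_K\Hom$ and $\dim_K\End$ on representation schemes (as in the cited lecture notes of Crawley-Boevey), which is precisely the kernel-of-a-polynomial-family-of-linear-maps argument you spell out. The concluding locally-closed statement (open set minus open set) is also handled as the paper does, so there is nothing to change.
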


Let $\calZ \in \Irr(\Lambda,d)$ and $M \in \rep(\Lambda,c)$
with $d,c \in (\Z_{\ge 0})^{Q_0}$.
For every $X \in \calZ$, we regard $\Hom_\Lambda(X,M)$ 
as a subset of 
$\Hom_K(K^d,K^c):=\prod_{i \in Q_0}\Hom_K(K^{d_i},K^{c_i})$.
For any subset $\calA \subset \calZ$, we set 
\begin{align*}
\calY(\calA,M)&:=
\{ (X,f) \in \calZ \times \Hom_K(K^d,K^c)
\mid X \in \calA, \ f \in \Hom_\Lambda(X,M) \}.
\end{align*}
Then we have the following.

\begin{Lem}\label{Lem_Bongartz}\cite[Lemma 2.1]{Bongartz}
Let $\calZ \in \Irr(\Lambda,d)$, $M \in \rep(\Lambda,c)$ 
with $d,c \in (\Z_{\ge 0})^{Q_0}$.
Then for each $t \in \Z_{\ge 0}$,
the set $\calY(\calZ_{M,t},M)$ is a locally closed subset of 
$\calZ \times \Hom_K(K^d,K^c)$, and the natural projection
\begin{align*}
\calY(\calZ_{M,t},M) \to \calZ_{M,t}
\end{align*}
is a vector bundle, and hence, an open map.
\end{Lem}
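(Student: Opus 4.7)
The plan is to identify $\calY(\calZ, M)$ (the total preimage, without the rank condition) as a closed subscheme of $\calZ \times \Hom_K(K^d, K^c)$ cut out by linear-in-$f$ equations, then intersect with the locally closed preimage of $\calZ_{M,t}$, and finally exploit constancy of the kernel dimension on that stratum to trivialize the projection as a vector bundle.

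First I would encode $\Lambda$-linearity as a morphism of trivial vector bundles over $\calZ$. For each arrow $\alpha \colon i \to j$ in $Q_1$, define
\[
\Phi_\alpha \colon \calZ \times \Hom_K(K^d, K^c) \to \calZ \times \Hom_K(K^{d_i}, K^{c_j}), \quad (X, f) \mapsto \bigl(X,\ f_j\, \phi^X_\alpha - \phi^M_\alpha\, f_i\bigr),
\]
and let $\Phi := (\Phi_\alpha)_{\alpha \in Q_1}$. A pair $(X, f)$ with $X = (\phi^X_\alpha)_\alpha$ lies in $\calY(\calZ, M)$ exactly when $\Phi(X, f) = 0$, so $\calY(\calZ, M)$ is closed in $\calZ \times \Hom_K(K^d, K^c)$. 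Since $\calZ_{M,t}$ is locally closed in $\calZ$ by Lemma \ref{Lem_general}, the intersection $\calY(\calZ_{M,t}, M) = \calY(\calZ, M) \cap (\calZ_{M,t} \times \Hom_K(K^d, K^c))$ is locally closed in $\calZ \times \Hom_K(K^d, K^c)$.

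For the vector bundle claim, set $N := \dim_K \Hom_K(K^d, K^c)$. By the definition of $\calZ_{M,t}$, every fiber of $\pi$ over $\calZ_{M,t}$ has dimension exactly $t$, so the linear map $\Phi|_X$ has rank exactly $N - t$ at every $X \in \calZ_{M,t}$. Given $X_0 \in \calZ_{M,t}$, I would pick an $(N - t) \times (N - t)$ minor of the matrix representing $\Phi|_{X_0}$ which is nonzero; this minor is a regular function of $X$, so it remains nonzero on an open neighborhood $U$ of $X_0$ in $\calZ_{M,t}$. On $U$ the rank of $\Phi|_X$ is at least $N - t$, hence exactly $N - t$, and one may solve the linear system $\Phi|_X(f) = 0$ to express the $(N - t)$ coordinates of $f$ picked out by the minor's columns as regular functions of $X$ and of the remaining $t$ coordinates of $f$. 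This yields a trivialization $\pi^{-1}(U) \xrightarrow{\sim} U \times K^t$, so $\pi$ is a Zariski-locally trivial rank-$t$ vector bundle. Openness of $\pi$ then follows because it is locally a coordinate projection $U \times K^t \to U$.

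The main obstacle is the constant-rank trivialization step: one must verify that the linear equations selected by the nonvanishing minor genuinely express $(N - t)$ of the $f$-coordinates as regular functions of the others on the locally closed set $\calZ_{M,t}$, and that these local trivializations cover all of $\calZ_{M,t}$. This is the standard argument that the kernel of a morphism of trivial vector bundles with locally constant fiber rank is itself a vector subbundle, so once the constancy of the rank of $\Phi|_X$ on $\calZ_{M,t}$ is in place, the bundle structure follows routinely.
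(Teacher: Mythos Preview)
Your proposal is correct and is precisely the argument of \cite[Lemma 2.1]{Bongartz} that the paper invokes without reproducing: encode $\Hom_\Lambda(X,M)$ as the kernel of the fiberwise-linear map $\Phi$ on the trivial bundle, restrict to the constant-rank stratum $\calZ_{M,t}$, and trivialize locally via a nonvanishing maximal minor. The paper's own proof is just the one-line remark that Bongartz's proof goes through verbatim in this setting, so you have supplied exactly the details the paper defers to the reference.
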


\begin{proof}
The proof of \cite[Lemma 2.1]{Bongartz} is valid also in this situation.
\end{proof}

Then we obtain the next property.

\begin{Prop}\label{Prop_GLFS}
Let $\calZ \in \Irr(\Lambda)$ and $B \in \calZ \cap \brick \Lambda$.
Then the following conditions are equivalent.
\begin{enumerate2}
\item
The set $\calZ \cap {^\perp B}$ is open dense in $\calZ$.
\item
The set $\calZ \cap {B^\perp}$ is open dense in $\calZ$.
\item
The orbit $\calO_B$ is not open dense in $\calZ$.
\end{enumerate2}
\end{Prop}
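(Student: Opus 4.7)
I plan to prove (a) $\Leftrightarrow$ (c); the equivalence (b) $\Leftrightarrow$ (c) will then follow by applying this to the opposite algebra $\Lambda^{\mathrm{op}}$ with $DB$ in place of $B$ and $D\calZ$ in place of $\calZ$, using the standard $K$-duality $D = \Hom_K(-, K)$. The key observation that makes everything work is that because $B \in \calZ$ and $\dim \End(B) = 1$, the generic value of $\dim \Hom(-, B)$ on $\calZ$ is automatically bounded by $1$, collapsing the analysis to one clean dichotomy.

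\textbf{Easy direction.} For (a) $\Rightarrow$ (c), I would argue that $\calZ \cap {^\perp B}$ and $\calO_B$ are disjoint, since any $X \in \calO_B$ satisfies $\Hom(X, B) \simeq \End(B) = K$. Two nonempty open subsets of the irreducible $\calZ$ must intersect, so if $\calZ \cap {^\perp B}$ is open dense then $\calO_B$ cannot be.

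\textbf{Main direction.} For (c) $\Rightarrow$ (a), Lemma \ref{Lem_general} provides the upper semi-continuous function $X \mapsto \dim \Hom(X, B)$, with generic minimum $t_0 := \min_{X \in \calZ} \dim \Hom(X, B)$ attained on the open dense stratum $\calZ_{B, t_0}$. Evaluating at $B$ forces $t_0 \le 1$. If $t_0 = 0$, then $\calZ \cap {^\perp B} = \calZ_{B, 0}$ is open dense and we are done. In the remaining case $t_0 = 1$, set $\calU := \calZ_{B, 1}$; this is open dense in $\calZ$ and contains the entire orbit $\calO_B$, because every $X \simeq B$ has $\dim \Hom(X, B) = 1$. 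Lemma \ref{Lem_Bongartz} then makes the projection $\calY(\calU, B) \to \calU$ a rank-$1$ vector bundle and, crucially, an \emph{open map}. Inside $\calY(\calU, B)$ I would consider the subset $\calY^{\mathrm{iso}}$ cut out by the nonvanishing of the componentwise determinants of $f$; it is open, and it is nonempty since it contains $(B, \mathrm{id}_B)$. Its image under the projection equals $\{X \in \calU : X \simeq B\} = \calO_B \cap \calU = \calO_B$, so $\calO_B$ is open in $\calZ$. Irreducibility of $\calZ$ then makes $\calO_B$ dense, hence open dense, contradicting (c).

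\textbf{Main obstacle.} The place I expect to be the main difficulty is handling a possibly large generic value of $\dim \Hom(-, B)$: the image-of-isomorphisms trick only works when the generic stratum $\calZ_{B, t_0}$ contains the orbit $\calO_B$, which requires $t_0 = 1$. This obstacle evaporates thanks to the brick hypothesis combined with $B \in \calZ$, which forces $t_0 \le 1$ at no cost; everything downstream is then a clean application of the openness in Lemma \ref{Lem_Bongartz}. The symmetric statement (b) $\Leftrightarrow$ (c) requires no new ideas, being the (a) $\Leftrightarrow$ (c) statement for $\Lambda^{\mathrm{op}}$.
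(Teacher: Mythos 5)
Your proposal is correct and follows essentially the same route as the paper: openness of the strata from Lemma \ref{Lem_general}, the open projection from Lemma \ref{Lem_Bongartz} on the generic stratum $\calZ_{B,1}$, and the open nonempty locus of isomorphisms whose image is $\calO_B$, with (b)$\Leftrightarrow$(c) obtained by duality. The only differences are cosmetic (you argue (c)$\Rightarrow$(a) via the dichotomy $t_0\in\{0,1\}$, while the paper phrases it as the contrapositive, noting that failure of (a) forces $\calZ\cap{^\perp B}=\emptyset$), so no further comment is needed.
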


\begin{proof}
We use an argument similar to the proof of 
\cite[Theorem 1.5, (iv)$\Rightarrow$(i)]{GLFS}.
We only show (a)$\Leftrightarrow$(c), 
since (b)$\Leftrightarrow$(c) is dual.

Let (a) hold.
Then $\calO_B$ cannot be open dense, because otherwise,
$(\calZ \cap {^\perp B}) \cap \calO_B$ becomes open dense in $\calZ$ by (a),
which implies the contradiction $B \in {^\perp B}$.
Thus we get (c).

Conversely, assume that (a) does not hold.
Then $\calZ \cap {^\perp B}$ is empty by Lemma \ref{Lem_general}.
This and $B \in \brick \Lambda$ imply 
$\calZ_{B,\le 1}=\calZ_{B,1} \ne \emptyset$.
Thus $\calU:=\calZ_{B,1}$ is open dense in $\calZ$ 
by Lemma \ref{Lem_general} again.
Then Lemma \ref{Lem_Bongartz} implies that the natural projection
\begin{align*}
p \colon \calY(\calU,B) \to \calU
\end{align*}
is an open map.

Define $\calV:=\{(X,f) \in \calY(\calU,B) \mid 
\text{$f$ is an isomorphism}\}$.
Then $\calV$ is open in $\calY(\calU,B)$, and 
it is nonempty by $(B,1_B) \in \calV$, where $1_B$ is the identity map.
Thus $p(\calV)$ is open and nonempty in $\calU$,
so $p(\calV)$ is open dense in $\calZ$.
On the other hand, $p(\calV)=\calO_B$ holds by definition.
Thus $\calO_B$ is open dense in $\calZ$, so (c) does not hold as desired.
\end{proof}

Now we can prove the following property.

\begin{Prop}\label{Prop_perp}
Let $\Lambda$ be a finite dimensional algebra 
over an algebracally closed field $K$, and $M,N \in \mod \Lambda$.
Assume that $B \in \brick \Lambda$ and $\calZ \subset \Irr(\Lambda)$
satisfy $B \in {^\perp M} \cap N^\perp$ and 
$\overline{\calO_B} \subsetneq \calZ$.
Then there exists some brick $B'$ such that
\begin{align*}
B' \in \calZ \cap \brick \Lambda 
\cap {^\perp (M \oplus B)} \cap {(N \oplus B)^\perp}. 
\end{align*}
\end{Prop}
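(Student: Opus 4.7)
The plan is to realize the desired $B'$ as a point in the intersection of five open subsets of $\calZ$, and to show this intersection is nonempty by exploiting irreducibility of $\calZ$ together with the density statements coming from Lemma~\ref{Lem_general} and Proposition~\ref{Prop_GLFS}.

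More precisely, I would introduce the five subsets
\begin{align*}
\calU_1 &:= \calZ \cap \brick \Lambda, &
\calU_2 &:= \calZ \cap {^\perp M}, &
\calU_3 &:= \calZ \cap N^\perp, \\
\calU_4 &:= \calZ \cap {^\perp B}, &
\calU_5 &:= \calZ \cap B^\perp,
\end{align*}
and observe that any $B' \in \calU_1 \cap \cdots \cap \calU_5$ automatically satisfies all the conclusions of the proposition, since $\Hom_\Lambda(B', M \oplus B) = 0$ and $\Hom_\Lambda(N \oplus B, B') = 0$ split into the four separate $\Hom$-vanishings built into $\calU_2, \calU_3, \calU_4, \calU_5$.

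Next I would verify that each $\calU_i$ is open in $\calZ$, using Lemma~\ref{Lem_general} directly for $\calU_1$ through $\calU_3$, $\calU_5$, and for $\calU_4$. It then remains to show each is nonempty, which combined with openness and the irreducibility of $\calZ$ will force them all to be dense and hence yield nonempty common intersection. For $\calU_1$, $\calU_2$, $\calU_3$ the point $B$ itself lies in each: $B \in \calZ$ is a brick by hypothesis, and $B \in {^\perp M} \cap N^\perp$ is part of the hypothesis. For $\calU_4$ and $\calU_5$, the hypothesis $\overline{\calO_B} \subsetneq \calZ$ means that $\calO_B$ is \emph{not} open dense in $\calZ$, so Proposition~\ref{Prop_GLFS} applied to $B \in \calZ \cap \brick \Lambda$ gives that both $\calZ \cap {^\perp B}$ and $\calZ \cap B^\perp$ are open dense in $\calZ$.

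At this point all five sets are open dense in the irreducible scheme $\calZ$, so their intersection is nonempty, and any $B'$ in the intersection gives the required brick. The main conceptual step is the nonemptiness of $\calU_4$ and $\calU_5$, which is precisely the content of Proposition~\ref{Prop_GLFS}; the rest is a routine packaging argument, and no further obstacle should appear, since we never need to exhibit $B'$ explicitly.
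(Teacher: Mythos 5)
Your proposal is correct and follows essentially the same route as the paper: openness and density of the five sets via Lemma~\ref{Lem_general} (nonemptiness witnessed by $B$ itself) and Proposition~\ref{Prop_GLFS} (since $\overline{\calO_B} \subsetneq \calZ$ forces $\calO_B$ not to be open dense), then intersecting open dense subsets of the irreducible $\calZ$. No gaps; this matches the paper's argument.
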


\begin{proof}
Since $B$ is a brick such that $\overline{\calO_M} \subsetneq \calZ$, 
both $\calZ \cap {^\perp B}$ and $\calZ \cap B^\perp$ are open dense
by Proposition \ref{Prop_GLFS}.
Moreover $B \in {^\perp M} \cap N^\perp$ and $B \in \brick \Lambda$
imply that $\calZ \cap {^\perp M}$,
$\calZ \cap N^\perp$ and $\calZ \cap \brick \Lambda$ are nonempty.
These sets are open dense in $\calZ$ by Lemma \ref{Lem_general}.
Therefore we get
\begin{align*}
\calZ \cap \brick \Lambda \cap {^\perp (M \oplus B)} 
\cap {(N \oplus B)^\perp} \ne \emptyset.
\end{align*}
Taking $B'$ from this set, we have the assertion.
\end{proof}

Then Theorem \ref{Thm_main} follows.

\begin{proof}[Proof of Theorem \ref{Thm_main}]
This is obtained by setting 
$M$ and $N$ as $\bigoplus_{X \in \calS \setminus \{B\}} X$
in Proposition \ref{Prop_perp},
because $\calS$ is finite.
\end{proof}

As in Section \ref{Sec_result}, Theorem \ref{Thm_main} implies 
Theorem \ref{Thm_open} and Corollary \ref{Cor_infin}. 
\qed

\medskip

We end this section with an observation on brick components.
As in \cite{GLFS}, 
an irreducible component $\calZ \in \Irr(\Lambda)$ is called 
a \emph{brick component} if $\calZ \cap \brick \Lambda \ne \emptyset$.
In this case, $\calZ \cap \brick \Lambda$ is open dense in $\calZ$ 
by Lemma \ref{Lem_general}.

\begin{Prop}\cite[Proposition 3.4]{MP2}\label{Prop_MP}
Let $\calZ \in \Irr(\Lambda)$ be a brick component.
Then exactly one of the following holds.
\begin{enumerate2}
\item
There exists an open brick $B \in \calZ \cap \brick \Lambda$ such that 
$\calZ \cap \brick \Lambda=\calO_B$;
in other words, $B$ is the unique brick in $\calZ$ up to isomorphisms.
\item
The set $\calZ \cap \brick \Lambda$ is a union of
infinitely many non-open bricks.
\end{enumerate2}
\end{Prop}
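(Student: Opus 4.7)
The plan is to split the argument by whether $\calZ$ contains an open brick. The crucial numerical ingredient is that every brick orbit in $\rep(\Lambda,d)$ has the same dimension, namely $\sum_{i \in Q_0} d_i^2 - 1$: the stabilizer of $M \in \rep(\Lambda,d)$ under the $\GL(d)$-action equals $\Aut_\Lambda(M)$, whose dimension coincides with $\dim_K \End_\Lambda(M)$, and this equals $1$ precisely when $M$ is a brick.

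Suppose first that some open brick $B$ lies in $\calZ \in \Irr(\Lambda,d)$, so $\calO_B$ is open dense in $\calZ$ and $\dim \calZ = \sum_i d_i^2 - 1$. For any $B' \in \calZ \cap \brick \Lambda$, the irreducible subset $\calO_{B'} \subset \calZ$ has dimension $\sum_i d_i^2 - 1 = \dim \calZ$, so $\overline{\calO_{B'}} = \calZ$ by irreducibility of $\calZ$. Thus $\calO_{B'}$ is dense in $\calZ$ and must meet the open dense orbit $\calO_B$; since distinct isomorphism classes give disjoint orbits, we conclude $B' \simeq B$. Hence $\calZ \cap \brick \Lambda = \calO_B$, which is case (a).

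Conversely, suppose $\calZ$ contains no open brick. Because $\calZ$ is a brick component, $\calZ \cap \brick \Lambda$ is nonempty and open in $\calZ$ by Lemma \ref{Lem_general}, hence open dense by irreducibility, and it decomposes as a union of $\GL(d)$-orbits (one per isomorphism class of brick). If only finitely many such orbits $\calO_{B_1},\ldots,\calO_{B_n}$ occurred, then $\calZ = \overline{\calZ \cap \brick \Lambda} = \bigcup_i \overline{\calO_{B_i}}$, and irreducibility of $\calZ$ would force $\overline{\calO_{B_i}} = \calZ$ for some $i$. Since each orbit is locally closed, this equality would make $\calO_{B_i}$ itself open in $\calZ$, turning $B_i$ into an open brick, which contradicts the standing assumption. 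Therefore $\calZ \cap \brick \Lambda$ consists of infinitely many orbits, each of them by hypothesis a non-open brick, giving case (b). The two cases are visibly mutually exclusive.

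The only technical points to track carefully are the dimension count for brick orbits (via the identification of the stabilizer with the automorphism group) and the elementary topological fact that a locally closed subset of an irreducible space is open whenever its closure is the whole space; neither should present a serious obstacle.
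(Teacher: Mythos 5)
Your proof is correct, but it takes a genuinely different route from the paper's. Your key ingredient is the orbit-dimension count: every brick orbit in $\rep(\Lambda,d)$ has dimension $\dim \GL(d)-1$ because the stabilizer of a point is $\Aut_\Lambda(M)$, open in $\End_\Lambda(M)$, hence one-dimensional for a brick; combined with the facts that a closed irreducible subset of $\calZ$ of full dimension is all of $\calZ$ and that a locally closed dense subset is open, this settles both cases. This is essentially the original argument of Mousavand--Paquette (the paper cites exactly this fact, \cite[Remark 3.3]{MP2}, as the ingredient of their proof), whereas the paper deliberately gives ``another proof'' avoiding any dimension count: assuming a brick $B'\in\calZ$ with $B'\notin\calO_B$, Proposition \ref{Prop_GLFS} makes $\calZ\cap{^\perp(B')}$ open dense, which then must contain the open dense orbit $\calO_B$, yielding $\Hom_\Lambda(B,B')=0$; hence $\calZ\cap B^\perp$ is open dense by Lemma \ref{Lem_general}, and Proposition \ref{Prop_GLFS} again contradicts the open density of $\calO_B$. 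Your approach buys a short, self-contained argument using only standard orbit geometry; the paper's buys a purely Hom-theoretic argument that showcases the semicontinuity machinery (Lemma \ref{Lem_general} and Proposition \ref{Prop_GLFS}) already developed for the main theorem. Incidentally, you spell out the step the paper leaves implicit (finitely many brick orbits in a brick component would force one orbit to be open dense), which is a welcome addition; just make sure you justify that a proper closed irreducible subset of an irreducible variety has strictly smaller dimension, which is the standard fact your case (a) relies on.
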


Thus open bricks are nothing but the bricks appearing in (a).

In \cite{MP2}, Proposition \ref{Prop_MP} is proved 
by using \cite[Remark 3.3]{MP2}; namely, 
every $B \in \calZ \cap \brick \Lambda$ satisfies
$\dim_K \calO_B=\dim_K \GL(d)-1$.
We give another proof of this proposition 
with Proposition \ref{Prop_GLFS}.

\begin{proof}
It suffices to show the following statement:
\begin{quote}
if there exists an open brick $B$ in $\calZ$,
then $\calZ \cap \brick \Lambda \subset \calO_B$.
\end{quote}
Let $B$ be an open brick in $\calZ$.
Assume that there exists $B' \in \calZ \cap \brick \Lambda$
such that $B' \notin \calO_B$.
Then since $\calO_B$ is open dense in $\calZ$,
the orbit $\calO_{B'} \subset \calZ$ cannot be open dense in $\calZ$,
so Proposition \ref{Prop_GLFS} implies that
$\calZ \cap {^\perp (B')}$ is open dense in $\calZ$.
This set is a union of orbits, so must contain the open dense orbit $\calO_B$.
Thus we get $B \in {^\perp (B')}$, which implies $B' \in B^\perp$,
so Lemma \ref{Lem_general} gives $\calZ \cap B^\perp$ 
is open dense in $\calZ$.
By Proposition \ref{Prop_GLFS} again,
$\calO_B$ is not open dense in $\calZ$, a contradiction.
\end{proof}

\section{Proposition \ref{Prop_perp} for path algebras}\label{Sec_old}

In this section, we record our first proof of 
a weaker version of Proposition \ref{Prop_perp} for path algebras,
since it is an interesting one 
relying on the well-established theory of semi-invariants \cite{King, SV, DW}
and presentation spaces \cite{DF}.
Theorem \ref{Thm_open} for path algebras follows also from this.

We assume that $\Lambda=KQ$ for a finite acyclic quiver $Q$ in the rest.
We say that a brick $B \in \brick \Lambda$ is \emph{exceptional}
if $\Ext_\Lambda^1(B,B)=0$ as in \cite{CB2}.
Since $\Lambda$ is a path algebra,
a brick $B \in \brick \Lambda$ is an open brick if and only if 
$B$ is exceptional by Voigt's Lemma \cite{Gabriel,Voigt}.

We aim to show the following weaker version of Proposition \ref{Prop_perp}.

\begin{Prop}\label{Prop_perp_weak}
Let $\Lambda=KQ$ with $Q$ a finite acyclic quiver,
and $M,N \in \mod \Lambda$.
Assume that $B \in {^\perp M} \cap N^\perp$
and that $B \in \brick \Lambda$ is not exceptional.
Then there exists some $B' \in \brick \Lambda$ such that
\begin{align*}
B' \in {^\perp (M \oplus B)} \cap (N \oplus B)^\perp, \quad
\dimv B' \in \Z_{\ge 1} \cdot \dimv B.
\end{align*}
\end{Prop}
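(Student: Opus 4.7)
My plan is to find $B'$ as a generic point of the irreducible variety $\rep(Q, nd) = \rep(\Lambda, nd)$ for some $n \ge 1$, where $d := \dimv B$. Since $B$ is a brick with $\Ext_\Lambda^1(B,B) \ne 0$ (as $B$ is not exceptional) and $\Lambda = KQ$ is hereditary, $d$ is a non-real (isotropic or imaginary) Schur root of $Q$ by Kac's theorem; in particular, $nd$ is a Schur root for every $n \ge 1$, so the brick locus is open dense in $\rep(Q, nd)$.

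Next, the conditions $\Hom_\Lambda(-, M) = 0$ and $\Hom_\Lambda(N, -) = 0$ cut out open subsets of $\rep(Q, nd)$ by Lemma \ref{Lem_general}, and both contain the direct sum $B^{\oplus n}$ as a witness (since $B \in {^\perp M} \cap N^\perp$ forces $\Hom_\Lambda(B^{\oplus n}, M) = \Hom_\Lambda(N, B^{\oplus n}) = 0$), so each is non-empty; hence both are open dense by irreducibility of $\rep(Q, nd)$.

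The core step is to exhibit elements $X \in \rep(Q, nd)$ with $\Hom_\Lambda(X, B) = 0$ and $\Hom_\Lambda(B, X) = 0$. The strategy is to use the Schofield semi-invariants $c^B$ and its dual, determinantal polynomial functions on $\rep(Q, nd)$ whose non-vanishing at $X$ detects $\Hom_\Lambda(X, B) = 0$ (and the dual detects $\Hom_\Lambda(B, X) = 0$). These semi-invariants are defined as honest functions on $\rep(Q, nd)$ only when the numerical condition $\langle nd, d \rangle = 0$ holds, which is automatic for every $n \ge 1$ in the isotropic case $\langle d, d \rangle = 0$. In this isotropic case, the non-exceptionality of $B$ forces $B$ into a homogeneous tube, so other quasi-simples $B'' \not\simeq B$ of dimension $d$ in that tube are pairwise orthogonal to $B$ and $(B'')^{\oplus n}$ explicitly witnesses non-vanishing of the semi-invariants. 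In the imaginary non-isotropic case $\langle d, d \rangle < 0$, one has $\langle nd, d \rangle \ne 0$ for every $n \ge 1$, so the Schofield semi-invariants on $\rep(Q, nd)$ do not exist directly; one must pass instead to the presentation space $\Hom_\Lambda(P_1, P_0)$ of Derksen-Fei, choosing the projectives $P_0, P_1$ so that the generic cokernel has dimension vector in $\Z_{\ge 1} d$, and a determinantal semi-invariant on this larger variety plays the role of $c^B$, with non-vanishing governed by the canonical decomposition of presentations.

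Combining the open dense subsets obtained above within the irreducible variety $\rep(Q, nd)$ yields a brick $B'$ with $\dimv B' = nd \in \Z_{\ge 1} \cdot d$ satisfying $B' \in {^\perp (M \oplus B)} \cap (N \oplus B)^\perp$. The main obstacle is the imaginary non-isotropic case of the core step: verifying non-triviality of the presentation-space determinantal semi-invariant requires the full machinery of the canonical decomposition of presentations, together with careful tracking of how the projective resolution of $B$ interacts with a generic presentation of the correct dimension vector.
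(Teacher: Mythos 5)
Your overall strategy (openness from Lemma \ref{Lem_general} plus irreducibility of $\rep(Q,nd)$, with the hard point being to produce modules of dimension in $\Z_{\ge 1}\cdot\dimv B$ lying in ${}^\perp B$ and in $B^\perp$, via semi-invariants on representation or presentation spaces) is essentially the route the paper takes, but the decisive step is left unproved exactly where it is hardest. In the non-isotropic imaginary (wild) case you say one should pass to the Derksen--Fei presentation space and use a determinantal semi-invariant whose ``non-vanishing \ldots requires the full machinery of the canonical decomposition of presentations,'' and you explicitly flag this as the main obstacle without resolving it. That is the genuine gap: nothing in your argument explains \emph{why} such a semi-invariant is nonzero, i.e.\ why a generic presentation $f\in\Hom(l\theta)$ with $\theta=\iota^{-1}(\dimv B)$ has $\Hom_\Lambda(\Coker f,B)=0$. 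The paper's key observation is that non-exceptionality of the brick $B$ gives $\theta(B)=1-\dim_K\Ext^1_\Lambda(B,B)\le 0$, and brickness gives $\theta(L)\le 0$ for every proper submodule $L$ of $B$, so $B\in\ovcalF_\theta$ (Lemma \ref{Lem_ovcalF_theta}); this stability-type condition is precisely the hypothesis of the cited non-vanishing theorem (Proposition \ref{Prop_Fei}), which then yields $X_1\in{}^\perp B$ and dually $X_2\in B^\perp$ with dimension vectors in $\Z_{\ge 1}\cdot\dimv B$ (Proposition \ref{Prop_X_B_weak}). Without identifying this criterion (or citing such a result), your wild case does not go through.

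Two further points. First, your claim that ``$nd$ is a Schur root for every $n\ge 1$'' is false when $d$ is isotropic: by Proposition \ref{Prop_Schofield} the canonical decomposition of $nd$ is then $d^{\oplus n}$, so the generic module of dimension $nd$ decomposes and the brick locus is empty for $n\ge 2$; your final step of taking a generic brick in $\rep(Q,nd)$ only works for $n=1$ in that case (the paper instead extracts an indecomposable summand of dimension $d$ from a generic module of dimension $l_1l_2d$ and concludes inside $\rep(Q,d)$). Second, your isotropic-case argument via homogeneous tubes rests on the unproved (though standard) facts that an isotropic Schur root has tame support with $d$ the null root and that a non-exceptional brick of that dimension is quasi-simple in a homogeneous tube; this can be repaired, but as written it is an additional gap, whereas the paper's treatment needs no case analysis at this stage because the $\ovcalF_\theta$-criterion handles isotropic and wild roots uniformly.
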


\subsection{Preliminaries}

In this subsection, 
we recall some additional properties on quiver representations.

Since $\Lambda=KQ$, the representation scheme $\rep(\Lambda,d)$
for $d \in (\Z_{\ge 0})^{Q_0}$ is just $\rep(Q,d)$.
It is the direct product 
$\prod_{\alpha \in Q_1}\Hom_K(K^{s(\alpha),t(\alpha)})$,
so in particular, irreducible.
Thus $\Irr(\Lambda)$ is identified with $(\Z_{\ge 0})^{Q_0}$.

We define Schur roots as follows.

\begin{Def}\label{Def_Schur}
Let $d \in (\Z_{\ge 0})^{Q_0}$.
Then we call $d$ a \emph{Schur root} 
if $\rep(Q,d) \cap \brick \Lambda$ is nonempty.
\end{Def}

By Lemma \ref{Lem_general}, if $d$ is a Schur root, 
then $\rep(Q,d) \cap \brick \Lambda$ is open dense in $\rep(Q,d)$.

Schur roots depend on the orientation of the quiver $Q$, 
but do not depend on the field $K$.
This follows from \cite[Proposition 6.1]{Schofield} 
together with the correction \cite{CB} of \cite[Proposition 3.3]{Schofield}.

We recall the following basic property of path algebras.

\begin{Lem}\label{Lem_Schur_indec}\cite[Proposition 1 (a)]{Kac2}
Let $d \in (\Z_{\ge 0})^{Q_0}$.
Then the following conditions are equivalent.
\begin{enumerate2}
\item
The element $d$ is a Schur root.
\item
The set $\{X \in \rep(Q,d) \mid \text{$X$ is indecomposable}\}$ contains
an open dense subset of $\rep(Q,d)$.
\end{enumerate2}
\end{Lem}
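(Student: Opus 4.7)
The plan is to handle the two implications separately, using throughout that $\rep(Q,d)$ is the affine space $\prod_{\alpha \in Q_1}\Hom_K(K^{d_{s(\alpha)}},K^{d_{t(\alpha)}})$ and hence irreducible.

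For (a) $\Rightarrow$ (b) the argument is essentially immediate. By Lemma \ref{Lem_general}, $\rep(Q,d) \cap \brick \Lambda$ is open; by hypothesis (a) it is nonempty; so the irreducibility of $\rep(Q,d)$ forces it to be open dense. Since every brick is indecomposable, (b) follows.

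For (b) $\Rightarrow$ (a), my plan is to locate a generic brick via the generic endomorphism algebra. First I would invoke the upper semicontinuity of $M \mapsto \dim_K \End_\Lambda(M)$ on the irreducible scheme $\rep(Q,d)$ to produce an open dense subset $V \subseteq \rep(Q,d)$ on which this dimension attains its minimum value $e$. Shrinking $V$ by intersecting with the open dense subset of indecomposables supplied by (b), I may assume every $M \in V$ is indecomposable, so $\End_\Lambda(M)$ is a local $K$-algebra of dimension $e$. The argument then reduces to showing $e = 1$, since each such $M$ would be a brick, witnessing (a).

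The hard part, and where I expect the main obstacle to lie, is ruling out $e \ge 2$. Here I would appeal to Kac's canonical decomposition theorem for path algebras: every $d \in (\Z_{\ge 0})^{Q_0}$ admits a unique decomposition (with multiplicities) into Schur roots $d_1, \ldots, d_s$ such that a generic $M \in \rep(Q,d)$ splits accordingly as a direct sum of generic indecomposables of dimensions $d_i$. Under (b), a generic $M$ is indecomposable, so the decomposition must be trivial and $d$ itself is a Schur root. Reproducing this decomposition is the substantive input of the proof, and is exactly what \cite[Proposition 1(a)]{Kac2} supplies; any self-contained route would have to re-enact its geometric heart, for instance by arguing that a nonzero nilpotent endomorphism of a generic indecomposable $M \in V$ persists over an open subset of $\rep(Q,d)$ and forces a nontrivial splitting of the generic representation there, contradicting (b).
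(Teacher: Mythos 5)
The paper offers no argument for this lemma at all: it is imported verbatim from \cite[Proposition 1 (a)]{Kac2}, so there is no internal proof to compare yours with. Judged on its own, your proposal has two parts of very different status. The direction (a)$\Rightarrow$(b) is correct and complete: $\rep(Q,d)$ is an affine space, hence irreducible, the brick locus is open by Lemma \ref{Lem_general} and nonempty by hypothesis, hence dense, and bricks are indecomposable. (This is exactly the observation the paper makes right after Definition \ref{Def_Schur}.)

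For (b)$\Rightarrow$(a), however, you do not actually supply a proof. The route through the canonical decomposition is fine \emph{inside the paper's deductive framework}: taking Definition-Proposition \ref{Def-Prop_canon} as known, one intersects its dense stratum with the dense set of indecomposables from (b) to force $m=1$, so $d=d_1$ is a Schur root --- and note that this makes your detour through the generic endomorphism dimension $e$ and the reduction to ``$e=1$'' superfluous. But, as you concede, this only relocates the content into another citation of Kac, and it is not an independent proof: the assertion in \ref{Def-Prop_canon} that the generic summands are \emph{Schur} roots is established in \cite{Kac,Kac2} by means of essentially the equivalence you are asked to prove. Moreover, your sketched self-contained alternative has a genuine gap: a nonzero nilpotent endomorphism of an indecomposable module does not force any splitting (indecomposable non-bricks are ubiquitous), so no contradiction with (b) arises along that line, and nothing in your argument rules out a dense family of indecomposables with local endomorphism ring of dimension $e\ge 2$. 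Excluding that is the real content of \cite[Proposition 1 (a)]{Kac2}; the standard argument compares $\dim\rep(Q,d)$, the generic orbit dimension $\dim\GL(d)-e$, and Kac's count of the number of parameters of indecomposable representations, none of which appears in your proposal. Since the paper treats the statement as a black box, simply citing it (as the paper does) would be the honest course; a genuinely self-contained proof would require reproducing that parameter-counting input.
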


By using Schur roots, 
we can consider canonical decompositions of dimension vectors.

\begin{Def-Prop}\label{Def-Prop_canon}
\cite[Subsection 2.8 a)]{Kac} 
Let $d\in (\Z_{\ge 0})^{Q_0}$.
Then there exist Schur roots $d_1,\ldots,d_m \in (\Z_{\ge 0})^{Q_0}$
such that the following set contains an open dense subset of $\rep(Q,d)$:
\begin{align*}
\{X \in \rep(Q,d) \mid 
\text{$X \simeq \bigoplus_{i=1}^m X_i$ with $\dimv X_i=d_i$
and $X_i$ is indecomposable for each $i$}\}.
\end{align*}
Moreover such $d_1,d_2\ldots,d_m$ are unique up to reordering.
In this case, we write $d=\bigoplus_{i=1}^m d_i$,
and call it the \emph{canonical decomposition} of $d$.
\end{Def-Prop}

The following trichotomy of Schur roots will play an important role.
Here we used the characterization \cite[Proposition 3]{Kac2}
of canonical decompositions.

\begin{Def}
Let $d \in (\Z_{\ge 0})^{Q_0}$ be a Schur root.
\begin{enumerate}
\item
We say that $d$ is \emph{real} 
if there exists some exceptional module $X \in \rep(Q,d)$.
\item
We say that $d$ is \emph{tame}
if $d$ is not real but the canonical decomposition of $2d$ is $d \oplus d$.
\item
We say that $d$ is \emph{wild} if $d$ is neither real nor tame.
\end{enumerate}
\end{Def}

Let $d$ be a Schur root.
If $d$ is real, 
then there uniquely exists a brick $B \in \rep(Q,d)$ up to isomorphisms
by \cite[Theorem 1]{Kac}, and $B$ is exceptional.
Thus if there exists a brick $B \in \rep(Q,d)$ which is not exceptional, 
then $d$ is tame or wild.

Canonical decompositions of multiples of Schur roots are given as follows.

\begin{Prop}\cite[Theorem 3.8]{Schofield}\label{Prop_Schofield}
Let $d \in (\Z_{\ge 0})^{Q_0}$ be a Schur root and $l \in \Z_{\ge 1}$.
Then the canonical decomposition of $ld$ is
$d^{\oplus l}$ if $d$ is real or tame, and $ld$ if $d$ is wild.
\end{Prop}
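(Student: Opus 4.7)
The plan hinges on the trichotomy for Schur roots. Since $B$ is a brick that is not exceptional, $d := \dimv B$ is a Schur root which is either tame or wild, and I would treat these two cases separately.

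\textbf{Tame case.} By Proposition \ref{Prop_Schofield}, the canonical decomposition of $2d$ is $d \oplus d$, so a generic module of dimension $2d$ splits as a direct sum of two non-isomorphic bricks of dimension $d$. This exhibits a positive-dimensional family $\calF$ of pairwise non-isomorphic bricks of dimension $d$. Restricted to $\calF$, the open conditions $\Hom(B', M) = 0$ and $\Hom(N, B') = 0$ hold at the point $B$ by hypothesis, hence generically on $\calF$ by upper semicontinuity (Lemma \ref{Lem_general}). For any $B' \in \calF$ with $B' \not\simeq B$, the bricks $B$ and $B'$ share dimension vector $d$ and are non-isomorphic, forcing $\Hom(B, B') = 0 = \Hom(B', B)$. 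A generic $B' \in \calF$ thus satisfies all requirements, with $\dimv B' = d = 1 \cdot \dimv B$.

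\textbf{Wild case.} By Proposition \ref{Prop_Schofield}, $ld$ is a wild Schur root for every $l \geq 1$, so the brick locus of the irreducible space $\rep(Q, ld)$ is open dense. I would aim to find $l \geq 1$ such that the four open subsets
\[
\{X \mid \Hom(X, M) = 0\}, \quad \{X \mid \Hom(N, X) = 0\}, \quad \{X \mid \Hom(X, B) = 0\}, \quad \{X \mid \Hom(B, X) = 0\}
\]
of $\rep(Q, ld)$ are simultaneously nonempty, hence simultaneously open dense; intersecting with the brick locus then yields the desired $B'$. Nonemptiness of the first two loci is witnessed by $B^{\oplus l}$, using $B \in {^\perp M} \cap N^\perp$. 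Nonemptiness of the last two I would establish via Schofield-type semi-invariants \cite{SV, DW}: when $\langle ld, d \rangle = 0$ the classical Schofield semi-invariant has $\{X \mid \Hom(X, B) = 0\}$ as its non-vanishing locus, and in general I would pass to the Derksen-Fei presentation-space framework \cite{DF} so that the analogous determinantal semi-invariants are defined for arbitrary $l$.

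The main obstacle lies in the wild case: producing a single $X \in \rep(Q, ld)$ with $\Hom(X, B) = 0$ (and, dually, $\Hom(B, X) = 0$), since the natural candidate $B^{\oplus l}$ fails because $\Hom(B, B^{\oplus l}) = K^l \ne 0$. The decisive step will be translating these Hom-vanishings into the non-vanishing of explicit determinantal semi-invariants on an appropriate presentation space, and then invoking Derksen-Weyman / Schofield-Van den Bergh spanning together with the wildness condition $\langle d, d \rangle < 0$ to confirm that the semi-invariant ring is rich enough for $l$ sufficiently large.
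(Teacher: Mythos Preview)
Your proposal does not address the stated proposition at all. Proposition~\ref{Prop_Schofield} asserts that for a Schur root $d$ and $l \ge 1$, the canonical decomposition of $ld$ is $d^{\oplus l}$ when $d$ is real or tame and is $ld$ itself when $d$ is wild. Your write-up, by contrast, is a proof sketch for Proposition~\ref{Prop_perp_weak}: you introduce a non-exceptional brick $B$, modules $M,N$, and search for a brick $B'$ with $B' \in {^\perp(M\oplus B)} \cap (N\oplus B)^\perp$ and $\dimv B' \in \Z_{\ge 1}\cdot\dimv B$. None of these objects occur in the hypotheses or conclusion of Proposition~\ref{Prop_Schofield}; indeed you even \emph{invoke} Proposition~\ref{Prop_Schofield} twice as an ingredient, which makes it plain you are proving a downstream statement rather than this one.

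The paper's own proof of Proposition~\ref{Prop_Schofield} is a citation argument: it observes that Schofield's original proof of \cite[Theorem~3.8]{Schofield} relies on \cite[Theorem~3.3]{Schofield}, whose characteristic-zero restriction is removed by Crawley-Boevey's correction \cite{CB}, after which \cite[Theorems~3.5,~3.7,~3.8]{Schofield} go through verbatim over any algebraically closed field. To match the paper you would simply need to supply this reference chain; nothing in your proposal does so.
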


\begin{proof}
The original proof of \cite[Theorem 3.3]{Schofield}
works in the case that the characteristic of $K$ is zero,
but it still holds for any algebraically closed field
thanks to Crawley-Boevey's correction \cite{CB}.
Then the proofs of \cite[Theorems 3.5, 3.7, 3.8]{Schofield} proceed.
\end{proof}

\subsection{Proof of Proposition \ref{Prop_perp_weak}}

We will use the Grothendieck group $K_0(\mod \Lambda)$ here.

For each vertex $i \in Q_0$,
we write $S_i$ for the simple $\Lambda$-module corresponding to $i$,
and $[S_i]$ for its isoclass.
Then the Grothendieck group $K_0(\mod \Lambda)$
is the free abelian group $\bigoplus_{i \in Q_0} \Z[S_i]$,
so we get $K_0(\mod \Lambda) \simeq \Z^{Q_0}$ as abelian groups.
Under this isomorphism, for each module $M \in \mod \Lambda$,
the element $[M] \in K_0(\mod \Lambda)$ corresponds 
to its dimension vector $\dimv M \in (\Z_{\ge 0})^{Q_0}$.
Thus in what follows, the dimension vector $\dimv M$ is regarded
as an element of $K_0(\mod \Lambda)_{\ge 0}:=
\sum_{i \in Q_0} \Z_{\ge 0}[S_i]$.

We also use the (split) Grothendieck group $K_0(\proj \Lambda)$
of the category $\proj \Lambda$ of finitely generated projective modules.
For each $i \in Q_0$, we write $P_i$ 
for the projective cover of the simple module $S_i$.
Then $K_0(\proj \Lambda)$ 
has the isoclasses of indecomposable projective modules
$[P_i]$ for $i \in Q_0$ as a $\Z$-basis,
so again we have 
$K_0(\proj \Lambda)=\bigoplus_{i \in Q_0} \Z[P_i] \simeq \Z^{Q_0}$.

From now on, a bracket $[\quad]$ will be used only to denote
an element of $K_0(\proj \Lambda)$.
Elements of $K_0(\mod \Lambda)$ will be expressed 
like $\dimv M$ or $\dimv M-\dimv N$ by using dimension vectors of modules.

Via the \emph{Euler bilinear form}
$\langle !,? \rangle \colon 
K_0(\proj \Lambda) \times K_0(\mod \Lambda) \to \Z$
given by $\langle [P_i],\dimv S_j \rangle=\delta_{i,j}$,
each element $\theta \in K_0(\proj \Lambda)$ induces a $\Z$-linear map 
$\theta:=\langle \theta,? \rangle \colon K_0(\mod \Lambda) \to \Z$.
In this way, 
we have $K_0(\proj \Lambda) \simeq \Hom_\Z(K_0(\mod \Lambda),\Z)$.
For a module $M \in \mod \Lambda$,
we set $\theta(M):=\theta(\dimv M) \in \Z$ in short.

To each $\theta \in K_0(\proj \Lambda)$,
Derksen-Fei \cite{DF} associated the \emph{presentation space} $\Hom(\theta)$.

\begin{Def}\cite[Definition 1.1]{DF}
Let $\theta \in K_0(\proj \Lambda)$.
\begin{enumerate}
\item
We define $P^0_\theta,P^1_\theta \in \proj \Lambda$ up to isomorphisms
so that $\theta=[P^0_\theta]-[P^1_\theta]$ and 
$\add P^0_\theta \cap \add P^1_\theta=\{0\}$.
\item
We define an irreducible algebraic variety 
$\Hom(\theta):=\Hom_\Lambda(P_1,P_0)$,
and call it the \emph{presentation space} of $\theta$.
\item
For each $f \in \Hom(\theta)$,
a 2-term complex $P_f$ is defined 
by $P_f:=(P_1^\theta \xrightarrow{f} P_0^\theta)$
in the homotopy category $\sfK^\rmb(\proj \Lambda)$ 
of bounded complexes over $\proj \Lambda$.
\end{enumerate}
\end{Def}

Then the value $\theta(M)$ can be calculated as follows,
where $\sfD^\rmb(\mod \Lambda)$ is 
the bounded derived category over $\mod \Lambda$.
The following is well-known and elementary.

\begin{Lem}\label{Lem_theta(M)_nu}
Let $\theta \in K_0(\proj \Lambda)$ and $f \in \Hom(\theta)$. 
Take the maximum $P \in \proj \Lambda$ such that 
$P[1]$ is a direct summand of $P_f$.
Then for any $M \in \mod \Lambda$, we have
\begin{align*}
\theta(M)&=\dim_K \Hom_{\sfD^\rmb(\mod \Lambda)}(P_f,M)
-\dim_K \Hom_{\sfD^\rmb(\mod \Lambda)}(P_f,M[1])\\
&=\dim_K \Hom_\Lambda(\Coker f,M)-
\dim_K \Ext_\Lambda^1(\Coker f,M)-\dim_K \Hom_\Lambda(P,M).
\end{align*}
\end{Lem}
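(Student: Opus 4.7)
My plan is to treat the two equalities in turn; the first is formal, while the second relies crucially on $\Lambda=KQ$ being hereditary.

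For the first equality, I would apply the cohomological functor $\Hom_{\sfD^\rmb(\mod \Lambda)}(-,M)$ to the canonical mapping-cone triangle
\[
P^1_\theta \xrightarrow{f} P^0_\theta \to P_f \to P^1_\theta[1]
\]
in $\sfD^\rmb(\mod \Lambda)$. Since $P^0_\theta,P^1_\theta \in \proj \Lambda$ give $\Ext^j_\Lambda(P^i_\theta,M)=0$ for $j \neq 0$, the long exact sequence collapses to
\[
0 \to \Hom_{\sfD^\rmb(\mod \Lambda)}(P_f,M) \to \Hom_\Lambda(P^0_\theta,M) \xrightarrow{f^*} \Hom_\Lambda(P^1_\theta,M) \to \Hom_{\sfD^\rmb(\mod \Lambda)}(P_f,M[1]) \to 0.
\]
The alternating sum of $K$-dimensions gives $\dim_K \Hom_{\sfD^\rmb(\mod \Lambda)}(P_f,M)-\dim_K \Hom_{\sfD^\rmb(\mod \Lambda)}(P_f,M[1]) = \dim_K \Hom_\Lambda(P^0_\theta,M)-\dim_K \Hom_\Lambda(P^1_\theta,M)$, and the right-hand side is $\theta(M)$ by the defining property $\langle [P],\dimv M \rangle = \dim_K \Hom_\Lambda(P,M)$ of the Euler bilinear form for $P \in \proj \Lambda$.

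For the second equality I would first exploit that $\Lambda=KQ$ is hereditary: the image $\Image f \subset P^0_\theta$ is a submodule of a projective and therefore projective, which forces the short exact sequence $0 \to \Ker f \to P^1_\theta \to \Image f \to 0$ to split and makes $\Ker f$ projective. Writing $P^1_\theta = \Ker f \oplus R$ with $R \simeq \Image f$, the map $f$ takes the form $(0,f|_R)$ with $f|_R \colon R \hookrightarrow P^0_\theta$ injective, yielding the decomposition
\[
P_f \simeq (R \xrightarrow{f|_R} P^0_\theta) \oplus (\Ker f)[1]
\]
in $\sfK^\rmb(\proj \Lambda)$. The first summand is a length-two projective resolution of its own cokernel $\Coker f$, so it is quasi-isomorphic to $\Coker f$ placed in degree $0$; hence $P_f \simeq \Coker f \oplus (\Ker f)[1]$ in $\sfD^\rmb(\mod \Lambda)$. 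Maximality of $P := \Ker f$ then follows, because any further shifted projective summand $P'[1]$ would appear as a direct summand of $H^{-1}(P_f)=\Ker f$.

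To conclude, I would apply $\Hom_{\sfD^\rmb(\mod \Lambda)}(-,M)$ and $\Hom_{\sfD^\rmb(\mod \Lambda)}(-,M[1])$ to this direct sum: the $\Coker f$ summand contributes $\dim_K \Hom_\Lambda(\Coker f,M)$ and $\dim_K \Ext^1_\Lambda(\Coker f,M)$ respectively, while the $P[1]$ summand contributes $\dim_K \Hom_{\sfD^\rmb(\mod \Lambda)}(P[1],M) = \Ext^{-1}_\Lambda(P,M)=0$ and $\dim_K \Hom_{\sfD^\rmb(\mod \Lambda)}(P[1],M[1]) = \dim_K \Hom_\Lambda(P,M)$. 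Subtracting and combining with the first equality yields the stated formula. The only step requiring a moment of care is the maximality of $P$, verified above via $H^{-1}$; after that, the argument is bookkeeping, with hereditariness doing the real work of splitting off $(\Ker f)[1]$.
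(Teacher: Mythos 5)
Your argument is correct and complete; note that the paper itself offers no proof of this lemma (it is dismissed as ``well-known and elementary''), so there is no authorial argument to compare against -- your write-up simply supplies the omitted verification. Both halves check out: the first equality follows, as you say, from the triangle $P^1_\theta \to P^0_\theta \to P_f \to P^1_\theta[1]$ and the vanishing of $\Ext^{j}_\Lambda(P^i_\theta,M)$ for $j\neq 0$, and this part is valid over any finite dimensional algebra. The second equality genuinely needs $\Lambda=KQ$ hereditary, exactly as you observe (for instance it fails for $\Lambda=K[x]/(x^2)$ with $f=x\colon\Lambda\to\Lambda$ and $M=\Lambda$), and this matches the lemma's placement in Section \ref{Sec_old}, where $\Lambda$ is assumed to be a path algebra. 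Your identification of the maximal shifted summand, $P\simeq\Ker f$, via $H^{-1}(P_f)=\Ker f$ and the splitting $P_f\simeq(R\to P^0_\theta)\oplus(\Ker f)[1]$ in $\sfK^\rmb(\proj\Lambda)$, is the standard Krull--Schmidt argument and is sound; the only stylistic remark is that one could shortcut the first equality by computing $\Hom_{\sfD^\rmb(\mod\Lambda)}(P_f,M[i])$ directly as the kernel and cokernel of $f^*\colon\Hom_\Lambda(P^0_\theta,M)\to\Hom_\Lambda(P^1_\theta,M)$, but your route through the long exact sequence is equally good.
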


We define a $\Z$-linear map 
\begin{align*}
\iota \colon K_0(\proj \Lambda) \to K_0(\mod \Lambda)
\end{align*}
so that $[P] \in K_0(\proj \Lambda)$ is sent to 
$\dimv P \in K_0(\mod \Lambda)$
for every indecomposable projective module $P \in \proj \Lambda$.

For any $\theta \in K_0(\proj \Lambda)$, the set
$\{ f \in \Hom(\theta) \mid \text{$f$ is injective} \}$ is an open subset
of $\Hom(\theta)$.
Thus we have the following property.

\begin{Lem}\label{Lem_Hom_inj}
Let $d \in (\Z_{\ge 0})^{Q_0} \simeq K_0(\mod \Lambda)_{\ge 0}$, 
and set $\theta:=\iota^{-1}(d)$.
\begin{enumerate}
\item
Let $N \in \rep(Q,d)$. 
Taking its minimal projective presentation $0 \to P_1 \to P_0 \to N \to 0$,
we get $\iota^{-1}(d)=[P_0]-[P_1]$.
\item
For any $l \in \Z_{\ge 1}$,
the set $\{ f \in \Hom(l \theta) \mid \text{$f$ is injective} \}$
is an open dense subset of $\Hom(l \theta)$.
\item
Let $N \in \rep(Q,d)$ and $M \in \mod \Lambda$.
Then we have 
\begin{align*}
\theta(M)=\dim_K \Hom_\Lambda(N,M)-\dim_K \Ext_\Lambda^1(N,M).
\end{align*}
\end{enumerate}
\end{Lem}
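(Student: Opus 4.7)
The unifying idea is that $\Lambda=KQ$ is hereditary, so every module has projective dimension at most $1$; the minimal projective presentation $0\to P_1\to P_0\to N\to 0$ is in fact a short exact sequence, and the Euler form $\dim_K\Hom_\Lambda(-,-) - \dim_K\Ext_\Lambda^1(-,-)$ descends to a well-defined bilinear form on dimension vectors. For part (1), applying $\iota$ to the class $[P_0]-[P_1]\in K_0(\proj\Lambda)$ and using this short exact sequence yields $\iota([P_0]-[P_1])=\dimv P_0-\dimv P_1=\dimv N=d$. For acyclic $Q$ the Cartan matrix becomes unitriangular after a topological ordering of the vertices, hence $\iota$ is an isomorphism, and therefore $[P_0]-[P_1]=\iota^{-1}(d)$.

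For part (2), I would note that $\Hom(l\theta)=\Hom_\Lambda(lP_1^\theta,lP_0^\theta)$ is a $K$-vector space, hence an irreducible affine variety, so any nonempty open subset is dense. Injectivity of $f$ is open, because $\dim_K\Ker f$ is upper semicontinuous in $f$. To see the open set of injective maps is nonempty, I would pick a sufficiently generic $N\in\rep(Q,d)$ (possible since $\rep(Q,d)$ is nonempty by hypothesis) and argue that for such $N$ no cancellable common projective summand appears in the minimal projective presentation, because having a common summand forces a simultaneous upward jump of $\dimv P_0^{\mathrm{min}}$ and $\dimv P_1^{\mathrm{min}}$, a closed condition on $\rep(Q,d)$. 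The resulting presentation then gives an injective $f^{(1)}\in\Hom(\theta)$, and $(f^{(1)})^{\oplus l}\in\Hom(l\theta)$ is injective for every $l\ge 1$.

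For part (3), I would apply Lemma \ref{Lem_theta(M)_nu} to an injective $f\in\Hom(\theta)$ supplied by (2). Injectivity forces $P_f$ to be quasi-isomorphic to $\Coker f$, concentrated in degree $0$, so any direct summand $P'[1]$ of $P_f$ in $\sfK^\rmb(\proj\Lambda)$ would remain a direct summand in $\sfD^\rmb(\mod\Lambda)$ and contribute $P'$ to $H^{-1}(P_f)$, forcing $P'=0$. Thus the maximal such $P$ is zero, and Lemma \ref{Lem_theta(M)_nu} collapses to
\[
\theta(M)=\dim_K\Hom_\Lambda(\Coker f,M)-\dim_K\Ext_\Lambda^1(\Coker f,M).
\]
Since $\dimv\Coker f=\iota(\theta)=d=\dimv N$ and $\Lambda$ is hereditary, the right-hand side depends only on the dimension vector $d$ via the Euler form, so it equals $\dim_K\Hom_\Lambda(N,M)-\dim_K\Ext_\Lambda^1(N,M)$. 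The most delicate step I expect is part (2): arranging an injective element of $\Hom(\theta)$ when the canonical pair $(P_0^\theta,P_1^\theta)$ does not literally arise from the minimal presentation of the given $N$; once that is secured, part (3) follows formally by the Euler-form argument above.
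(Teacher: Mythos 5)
Your parts (1) and (3) are correct and essentially the paper's own argument: (1) is the same $K_0$ computation, and in (3) your route through an injective $f\in\Hom(\theta)$ from (2), together with the fact that $\dim_K\Hom_\Lambda(-,M)-\dim_K\Ext^1_\Lambda(-,M)$ depends only on the dimension vector over the hereditary algebra $KQ$, is a harmless variant of the paper's direct use of the minimal presentation of $N$. The problem is in (2), precisely at the step you yourself flag as delicate. You are right that this needs care: the minimal presentation of an arbitrary $N\in\rep(Q,d)$ need not be a point of $\Hom(\theta)$, since $P_0^{\mathrm{min}}$ and $P_1^{\mathrm{min}}$ can share an indecomposable summand even over a hereditary algebra (for $1\to 2\to 3$ and $N=(P_1/\soc P_1)\oplus P_3$ the minimal presentation is $0\to P_3\to P_1\oplus P_3\to N\to 0$). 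But your proposed justification does not close this: observing that ``having a common summand'' is a closed condition on $\rep(Q,d)$ only shows the bad locus is closed, and a closed subset of the irreducible variety $\rep(Q,d)$ could a priori be all of it. The entire content of the step is to exhibit at least one $N$ with reduced minimal presentation (equivalently, one injective element of $\Hom(\theta)$), and semicontinuity alone does not provide it; as written, this existence is asserted rather than proved, so there is a genuine gap at your central point.

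The missing argument is short. The multiplicity of $P_i$ in $P_0^{\mathrm{min}}(N)$ is $\dim_K\Hom_\Lambda(N,S_i)=d_i-\mathrm{rank}\bigl(\bigoplus_{a\in Q_1,\ t(a)=i}N_{s(a)}\to N_i\bigr)$, the multiplicity of $P_i$ in $P_1^{\mathrm{min}}(N)$ is $\dim_K\Ext^1_\Lambda(N,S_i)$, and their difference is the constant $d_i-\sum_{t(a)=i}d_{s(a)}$. Since $\rep(Q,d)$ is a product of full matrix spaces, for generic $N$ the displayed rank equals $\min\bigl(d_i,\sum_{t(a)=i}d_{s(a)}\bigr)$, so for every vertex $i$ one of the two multiplicities vanishes; hence a generic $N$ has reduced minimal presentation, its presentation map is an injective point of $\Hom(\theta)$, and its $l$-fold direct sum lies in $\Hom(l\theta)$, which is what (2) needs. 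For comparison, the paper's proof of (2) simply takes an arbitrary $N$ and treats $f^{\oplus l}$ as a point of $\Hom(l\theta)$, passing over this reduction issue silently; so your overall route is the same as the paper's and your extra scruple is well placed, but it must be discharged by an argument such as the one above rather than by the closedness remark.
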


\begin{proof}
(1)
By the short exact sequence $0 \to P_1 \to P_0 \to N \to 0$,
we have $d=\dimv N=\dimv P_0-\dimv P_1=\iota([P_0])-\iota([P_1])$ 
in $K_0(\mod \Lambda)$.
Since $\iota$ is an isomorphism, we have the assertion.

(2)
By upper semicontinuity,
we know that $\{f \in \Hom(\theta) \mid \text{$f$ is injective}\}$ is open
in $\Hom(l\theta)$, so will show that it is nonempty.
Take $N \in \rep(Q,d)$
and its minimal presentation $0 \to P_1 \xrightarrow{f} P_0 \to N \to 0$.
Then $f^{\oplus l}$ is injective.
Thus $\{f \in \Hom(\theta) \mid \text{$f$ is injective}\}$ is nonempty;
hence it is open dense in $\Hom(l\theta)$.

(3)
Take the minimal presentation $0 \to P_1 \xrightarrow{f} P_0 \to N \to 0$.
Then $f$ is injective, so Lemma \ref{Lem_theta(M)_nu} implies the assertion.
\end{proof}

We use the following subcategory associated to each $\theta$.

\begin{Def}\cite[Subsection 3.1]{BKT}
Let $\theta \in K_0(\proj \Lambda)$.
Then we set
\begin{align*}
\ovcalF_\theta&:=\{M \in \mod \Lambda \mid 
\text{any submodule $L$ of $M$ satisfies $\theta(L) \le 0$}\}.
\end{align*}
\end{Def}

The subcategory $\ovcalF_\theta$ is a torsion-free class in $\mod \Lambda$,
that is, $\ovcalF_\theta$ is closed under taking extensions and submodules.
However, we do not use this property directly.
We have the following observation.

\begin{Lem}\label{Lem_ovcalF_theta}
Assume that $B \in \brick \Lambda$ is not exceptional.
Set $\theta:=\iota^{-1}(\dimv B) \in K_0(\proj \Lambda)$.
Then we have $B \in \ovcalF_\theta$.
\end{Lem}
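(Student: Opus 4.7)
The plan is to use Lemma \ref{Lem_Hom_inj}(3) with the choice $N = B$, which is legitimate since $\dimv B = d$. This turns the defining condition of $\ovcalF_\theta$ into a computation in terms of $\Hom$ and $\Ext^1$: for any submodule $L$ of $B$,
\begin{align*}
\theta(L) = \dim_K \Hom_\Lambda(B, L) - \dim_K \Ext^1_\Lambda(B, L),
\end{align*}
and we must show this is $\le 0$. I would argue this by splitting into cases according to $L$.

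If $L = 0$, then $\theta(L) = 0$ trivially. If $L = B$, then since $B$ is a brick we have $\dim_K \Hom_\Lambda(B,B) = 1$, and since $B$ is \emph{not} exceptional we have $\Ext^1_\Lambda(B,B) \ne 0$, whence $\theta(B) \le 1 - 1 = 0$.

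The remaining case is $0 \subsetneq L \subsetneq B$. Here the plan is to show directly that $\Hom_\Lambda(B, L) = 0$, which immediately gives $\theta(L) = -\dim_K \Ext^1_\Lambda(B,L) \le 0$. Writing $\iota \colon L \hookrightarrow B$ for the inclusion, any $f \in \Hom_\Lambda(B, L)$ yields an endomorphism $\iota \circ f \in \End_\Lambda(B) = K$. If $\iota \circ f \ne 0$, it is a nonzero scalar, hence an isomorphism; then its image, which lies in $L$, equals $B$, contradicting $L \subsetneq B$. So $\iota \circ f = 0$, and since $\iota$ is injective, $f = 0$.

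There is no real obstacle here — the argument is a standard brick/non-exceptional dichotomy — but the step that carries the actual content is invoking $\Ext^1_\Lambda(B,B) \ne 0$ in the case $L = B$, which is precisely the place where the hypothesis that $B$ is not exceptional is used. The case $L \subsetneq B$, by contrast, uses only the brick property.
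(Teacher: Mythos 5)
Your proof is correct and follows essentially the same route as the paper: apply Lemma \ref{Lem_Hom_inj}(3) with $N=B$, use the brick property plus non-exceptionality to get $\theta(B)\le 0$, and use $\Hom_\Lambda(B,L)=0$ for proper submodules $L$ (which you justify by the standard scalar argument the paper leaves implicit). Only cosmetic remark: avoid reusing the symbol $\iota$ for the inclusion $L\hookrightarrow B$, since the paper already uses $\iota$ for the map $K_0(\proj \Lambda)\to K_0(\mod \Lambda)$.
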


\begin{proof}
Since $B$ is a brick, we get $\dim_K \Hom_\Lambda(B,B)=1$.
Moreover, since $B$ is not exceptional, 
we obtain $\dim_K \Ext_\Lambda^1(B,B) \ge 1$.
Then Lemma \ref{Lem_Hom_inj} (3) gives $\theta(B) \le 0$.
For any proper submodule $L$ of $B$, we have $\Hom_\Lambda(B,L)=0$,
since $B$ is a brick.
Then $\theta(L) \le 0$ follows again from Lemma \ref{Lem_Hom_inj} (3).
Thus we get $B \in \ovcalF_\theta$.
\end{proof}

The following property from semi-invariants is important.

\begin{Prop}\label{Prop_Fei}
\cite[Theorem 3.6]{Fei2}
\cite[Theorem 4.3]{AsI1}
\cite[Proposition 4.10]{Garcia}
Let $\theta \in K_0(\proj \Lambda)$ and $M \in \ovcalF_\theta$.
Then there exists some $l \in \Z_{\ge 1}$ such that
$\{ f \in \Hom_\Lambda(l \theta) \mid \Hom_\Lambda(\Coker f,M)=0 \}$
is an open dense subset of $\Hom_\Lambda(l \theta)$.
\end{Prop}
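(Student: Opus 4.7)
The plan is to combine a standard semicontinuity argument on the irreducible affine variety $\Hom_\Lambda(l\theta)$ with the non-vanishing of a Schofield-type determinantal semi-invariant attached to $M$, as developed for presentation spaces in \cite{DF}.

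First I would handle openness. Writing $\theta = [P^0_\theta] - [P^1_\theta]$ so that $\Hom_\Lambda(l\theta) = \Hom_\Lambda(lP^1_\theta, lP^0_\theta)$, Lemma \ref{Lem_Hom_inj}(2) ensures that the injective morphisms form an open dense subset. For any injective $f$, the short exact sequence $0 \to lP^1_\theta \xrightarrow{f} lP^0_\theta \to \Coker f \to 0$ gives $\Hom_\Lambda(\Coker f, M) \cong \ker \Hom_\Lambda(f, M)$, whose dimension is upper semicontinuous in $f$ as the kernel dimension of a family of linear maps between fixed finite dimensional vector spaces. Hence $\{f \in \Hom_\Lambda(l\theta) \mid \Hom_\Lambda(\Coker f, M) = 0\}$ is open in $\Hom_\Lambda(l\theta)$. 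Since this variety is an affine space, in particular irreducible, it then suffices to exhibit, for some $l \ge 1$, a single $f_0 \in \Hom_\Lambda(l\theta)$ with $\Hom_\Lambda(\Coker f_0, M) = 0$.

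The hypothesis $M \in \ovcalF_\theta$ forces $\theta(M) \le 0$ (take $L = M$), so
\[
\dim_K \Hom_\Lambda(lP^0_\theta, M) \le \dim_K \Hom_\Lambda(lP^1_\theta, M),
\]
and injectivity of $\Hom_\Lambda(f_0, M)$ is numerically possible. To actually secure such an $f_0$, I would attach to $M$ and $l$ a determinantal semi-invariant $c^l_M$ on $\Hom_\Lambda(l\theta)$, defined, after choosing bases, as a maximal minor of the matrix representing $\Hom_\Lambda(f, M)$; its non-vanishing at $f$ is equivalent to injectivity of $\Hom_\Lambda(f, M)$, hence to $\Hom_\Lambda(\Coker f, M) = 0$ in the injective range. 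The required $f_0$ exists precisely when $c^l_M \not\equiv 0$ on $\Hom_\Lambda(l\theta)$.

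The main obstacle is to prove that $c^l_M \not\equiv 0$ for some $l$, and this is exactly where the torsion-free hypothesis $M \in \ovcalF_\theta$ enters in an essential way. Here I would invoke the saturation theorem for semi-invariants on presentation spaces in the form established in \cite{DF, Fei2, AsI1, Garcia}: the family of determinantal semi-invariants indexed by $l \ge 1$ cuts out, in the limit, precisely the locus corresponding to the condition that every submodule $L$ of $M$ satisfies $\theta(L) \le 0$, which is our assumption. Consequently, some $c^l_M$ must fail to vanish identically, yielding the desired $f_0$; combined with the openness established in the first step, this gives an open dense subset of $\Hom_\Lambda(l\theta)$ of the required form.
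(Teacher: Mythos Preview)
The paper does not give its own proof of this proposition; it is quoted from \cite{Fei2,AsI1,Garcia} without argument. Your outline is consistent with the strategy in those references: upper semicontinuity handles openness, and the substantive content is the non-vanishing of a determinantal semi-invariant $c^l_M$ on $\Hom(l\theta)$ for some $l \ge 1$. But your proposal does not actually supply that non-vanishing argument---at the crucial moment you ``invoke the saturation theorem \ldots\ established in \cite{DF,Fei2,AsI1,Garcia}'', which are precisely the sources the paper is citing for the proposition itself. So what you have written is a correct sketch of how the cited proofs are organised, not an independent proof; the hard step remains a black box in both the paper and your write-up.

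Two minor technical remarks. First, the identification $\Hom_\Lambda(\Coker f,M)\cong \ker\Hom_\Lambda(f,M)$ holds for \emph{every} $f$, by left exactness of $\Hom_\Lambda(-,M)$ applied to $P^1 \xrightarrow{f} P^0 \to \Coker f \to 0$; you do not need $f$ injective for this, so the appeal to Lemma~\ref{Lem_Hom_inj}(2) in the openness step is unnecessary. Second, Lemma~\ref{Lem_Hom_inj}(2) is stated only for $\theta=\iota^{-1}(d)$ with $d \in K_0(\mod\Lambda)_{\ge 0}$, not for arbitrary $\theta \in K_0(\proj\Lambda)$, so invoking it in general is not justified as written. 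Fortunately neither point affects the validity of the openness argument, since upper semicontinuity of $f \mapsto \dim_K \ker\Hom_\Lambda(f,M)$ on the irreducible affine space $\Hom(l\theta)$ already yields that the vanishing locus is open, and density follows once a single $f_0$ with $\Hom_\Lambda(\Coker f_0,M)=0$ is produced.
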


Lemma \ref{Lem_ovcalF_theta} and Proposition \ref{Prop_Fei} give 
the next properties.

\begin{Prop}\label{Prop_X_B_weak}
Let $\Lambda=KQ$.
Assume that $B \in \brick \Lambda$ is not exceptional.
\begin{enumerate}
\item
There exists $X \in \mod \Lambda$ satisfying 
$X \in {^\perp B}$ and $\dimv X \in \Z_{\ge 1} \cdot \dimv B$.
\item
There exists $X \in \mod \Lambda$ satisfying 
$X \in B^\perp$ and $\dimv X \in \Z_{\ge 1} \cdot \dimv B$.
\end{enumerate}
\end{Prop}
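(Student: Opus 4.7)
The plan is to combine the semi-invariant output (Proposition \ref{Prop_Fei}) with the genericity of injectivity for projective presentations (Lemma \ref{Lem_Hom_inj} (2)) so that, in one shot, a single cokernel has both the right dimension vector and the right orthogonality. Part (2) will then be reduced to part (1) by passing to the opposite quiver via $K$-duality.

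For part (1), I would set $\theta := \iota^{-1}(\dimv B) \in K_0(\proj \Lambda)$. Lemma \ref{Lem_ovcalF_theta} gives $B \in \ovcalF_\theta$, so Proposition \ref{Prop_Fei} applied to $M = B$ yields some $l \in \Z_{\ge 1}$ such that
\[
\calU_1 := \{f \in \Hom(l\theta) \mid \Hom_\Lambda(\Coker f, B) = 0\}
\]
is open dense in $\Hom(l\theta)$. On the other hand, Lemma \ref{Lem_Hom_inj} (2) says that the injectivity locus
\[
\calU_2 := \{f \in \Hom(l\theta) \mid f \text{ is injective}\}
\]
is also open dense. Since the presentation space $\Hom(l\theta) = \Hom_\Lambda(P^1_{l\theta}, P^0_{l\theta})$ is an affine $K$-vector space, hence irreducible, $\calU_1 \cap \calU_2$ is nonempty. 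Picking any $f$ in this intersection and setting $X := \Coker f$, injectivity of $f$ forces $\dimv X = \iota(l\theta) = l \cdot \dimv B$, while $f \in \calU_1$ gives $X \in {^\perp B}$.

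For part (2), I would invoke the $K$-duality $D := \Hom_K(-,K) \colon \mod KQ \to \mod KQ^{\mathrm{op}}$. Since $KQ^{\mathrm{op}}$ is again a path algebra and $D$ preserves $K$-dimensions of Hom and Ext groups (with variance reversed), the module $DB$ is a brick in $\mod KQ^{\mathrm{op}}$ that is not exceptional, and $\dimv DB = \dimv B$. Applying part (1) to $KQ^{\mathrm{op}}$ and $DB$ produces $Y \in \mod KQ^{\mathrm{op}}$ with $\Hom_{KQ^{\mathrm{op}}}(Y, DB) = 0$ and $\dimv Y \in \Z_{\ge 1} \cdot \dimv B$. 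Setting $X := DY \in \mod KQ$ then gives $\Hom_{KQ}(B, X) \cong \Hom_{KQ^{\mathrm{op}}}(Y, DB) = 0$, i.e.\ $X \in B^\perp$, with the required dimension vector.

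The only nontrivial point is that Proposition \ref{Prop_Fei} by itself controls $\Coker f$ but says nothing about its dimension vector unless $f$ is injective. The whole argument hinges on the observation that the Hom-vanishing condition and the injectivity condition both cut out open dense subsets of the single irreducible variety $\Hom(l\theta)$, so their intersection is automatically nonempty. No finer analysis (for instance, of the canonical decomposition of $l \cdot \dimv B$) is required.
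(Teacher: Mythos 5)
Your proposal is correct and matches the paper's own argument for (1): intersect the open dense locus from Proposition \ref{Prop_Fei} with the open dense injectivity locus of Lemma \ref{Lem_Hom_inj}~(2) in the irreducible space $\Hom(l\theta)$, and read off $\dimv \Coker f = \iota(l\theta)$ from the resulting short exact sequence. For (2) the paper merely says ``dually shown,'' and your explicit reduction via the $K$-duality to $KQ^{\mathrm{op}}$ is exactly the intended dual argument, so there is nothing to add.
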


\begin{proof}
(1)
We set $d:=\dimv B$ and $\theta:=\iota^{-1}(d) \in K_0(\proj \Lambda)$.
Since $B$ is a brick which is not exceptional, 
Lemma \ref{Lem_ovcalF_theta} implies $B \in \ovcalF_\theta$.

By Proposition \ref{Prop_Fei},
we can take $l \in \Z_{\ge 1}$ such that
$\{ f \in \Hom_\Lambda(l \theta) \mid \Hom_\Lambda(\Coker f,B)=0 \}$
is an open dense subset of $\Hom_\Lambda(l \theta)$.
Moreover Lemma \ref{Lem_Hom_inj} (2) implies that
$\{ f \in \Hom_\Lambda(l \theta) \mid \text{$f$ is injective} \}$
is an open dense subset of $\Hom_\Lambda(l \theta)$.
Therefore we can take $f \in \Hom_\Lambda(l \theta)$ which is injective
and satisfies $\Hom_\Lambda(\Coker f,B)=0$.

Set $X:=\Coker f$.
Then $f$ is a minimal projective presentation of $X$,
so we have $\iota^{-1}(\dimv X)=l \theta$ in $K_0(\proj \Lambda)$
by Lemma \ref{Lem_Hom_inj} (1).
Thus we get $\dimv X=\iota(l \theta)=ld=l \cdot \dimv B$ 
in $K_0(\mod \Lambda)$.
Moreover $\Hom_\Lambda(X,B)=\Hom_\Lambda(\Coker f,B)=0$ are 
nothing but $X \in {^\perp B}$.

(2) is dually shown.
\end{proof}

Now we can prove Proposition \ref{Prop_perp_weak}.

\begin{proof}[Proof of Proposition \ref{Prop_perp_weak}]
Set $d:=\dimv B$.
Since $B$ is not exceptional,
Proposition \ref{Prop_X_B_weak} allows us 
to take $X_1,X_2 \in \mod \Lambda$ and $l_1,l_2 \in \Z_{\ge 1}$
with $X_1 \in {^\perp B}$, $X_2 \in {B^\perp}$ and 
$\dimv X_i=l_i d$ ($i \in \{1,2\}$).

We first show that 
\begin{align*}
\rep(Q,l_1l_2d) \cap {^\perp (M \oplus B)} \cap {(N \oplus B)^\perp}
\end{align*}
is open dense in $\rep(Q,l_1l_2d)$.

Since $B \in {^\perp M} \cap {N^\perp}$,
the module $B^{\oplus l_1l_2}$ belongs 
to both $\rep(Q,l_1l_2d) \cap {^\perp M}$
and $\rep(Q,l_1l_2d) \cap {N^\perp}$.
Then by Lemma \ref{Lem_general}, 
these sets are open dense in $\rep(Q,l_1l_2d)$.

Moreover $X_1 \in {^\perp B}$, $X_2 \in {B^\perp}$ and 
$\dimv X_i=l_i d$ ($i \in \{1,2\}$) imply
$X_1^{\oplus l_2} \in {^\perp B}$, $X_2^{\oplus l_1} \in {B^\perp}$ and
$\dimv X_1^{\oplus l_2}=\dimv X_2^{\oplus l_1}=l_1l_2d$.
Thus the sets $\rep(Q,l_1l_2d) \cap {^\perp B}$ 
and $\rep(Q,l_1l_2d) \cap {B^\perp}$
are open dense in $\rep(Q,l_1l_2d)$ by Lemma \ref{Lem_general} again.

Thus 
$\rep(Q,l_1l_2d) \cap {^\perp (M \oplus B)} \cap {(N \oplus B)^\perp}$
is open dense.

Since $B \in \brick \Lambda$ is not exceptional, 
$d$ is a Schur root which is tame or wild.
Set $l:=1$ if $d$ is tame, and $l:=l_1l_2$ if $d$ is wild.
It suffices to show that
\begin{align*}
\calX:=\rep(Q,ld) \cap \brick \Lambda \cap {^\perp (M \oplus B)} 
\cap {(N \oplus B)^\perp}
\end{align*}
is nonempty.

If $d$ is tame, then Proposition \ref{Prop_Schofield} 
and Definition-Proposition \ref{Def-Prop_canon} imply that
there exists an open dense subset $\calU \subset \rep(Q,l_1l_2d)$ such that 
\begin{align*}
\calU \subset \{X \in \rep(Q,l_1l_2d) \mid 
\text{$X \simeq \bigoplus_{i=1}^{l_1l_2} X_i$ with $\dimv X_i=d$
and $X_i$ is indecomposable for each $i$}\}.
\end{align*}
Thus $\rep(Q,l_1l_2d) \cap {^\perp (M \oplus B)} \cap {(N \oplus B)^\perp} 
\cap \calU$ is open dense in $\rep(Q,l_1l_2d)$.
Take $X$ from this set, and take an indecomposable direct summand $Y$ of $X$.
Then $Y \in {^\perp (M \oplus B)} \cap {(N \oplus B)^\perp}$ 
follows from $X \in {^\perp (M \oplus B)} \cap {(N \oplus B)^\perp}$,
and $\dimv Y=d$ comes from $X \in \calU$.
Thus $Y$ belongs to 
$\rep(Q,d) \cap {^\perp (M \oplus B)} \cap {(N \oplus B)^\perp}$,
so this set is open dense in $\rep(Q,d)$ by Lemma \ref{Lem_general}.
Moreover since $d$ is a Schur root, $\rep(Q,d) \cap \brick \Lambda$
is open dense in $\rep(Q,d)$ by Lemma \ref{Lem_general}.
Thus the set $\calX \subset \rep(Q,d)$ is open dense, and thus, nonempty.

If $d$ is wild, then $l_1l_2d$ is a Schur root 
by Proposition \ref{Prop_Schofield},
so $\rep(Q,l_1l_2d) \cap \brick \Lambda$ is open dense in $\rep(Q,l_1l_2d)$.
Since $\rep(Q,l_1l_2d) \cap {^\perp (M \oplus B)} \cap {(N \oplus B)^\perp}$
is open dense in $\rep(Q,l_1l_2d)$, so is $\calX$ as desired.
\end{proof}

Proposition \ref{Prop_perp_weak} implies 
the following weaker version of Theorem \ref{Thm_main} for path algebras.

\begin{Thm}\label{Thm_main_weak}
Let $\Lambda=KQ$, 
and $\calS \in \sbrick \Lambda$ be a finite semibrick.
Assume that a brick $B \in \calS$ is not exceptional.
Then there exists some brick $B'$ satisfying
\begin{align*}
\calS \sqcup \{B'\} \in \sbrick \Lambda, \quad
\dimv B' \in \Z_{\ge 1} \cdot \dimv B.
\end{align*}
\end{Thm}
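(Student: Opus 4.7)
The plan is to deduce Theorem \ref{Thm_main_weak} directly from Proposition \ref{Prop_perp_weak}, mirroring exactly how Theorem \ref{Thm_main} is obtained from Proposition \ref{Prop_perp} earlier in the paper. First, I would use the finiteness of $\calS$ to form the single module $M := N := \bigoplus_{X \in \calS \setminus \{B\}} X \in \mod \Lambda$. The semibrick hypothesis on $\calS$, together with $B \in \calS$, immediately yields $\Hom_\Lambda(X,B) = \Hom_\Lambda(B,X) = 0$ for every $X \in \calS \setminus \{B\}$, which is exactly the condition $B \in {^\perp M} \cap N^\perp$. Combined with the assumption that $B$ is not exceptional, this supplies all the hypotheses of Proposition \ref{Prop_perp_weak}.

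Next, I would apply Proposition \ref{Prop_perp_weak} to produce a brick $B'$ with $B' \in {^\perp(M \oplus B)} \cap (N \oplus B)^\perp$ and $\dimv B' \in \Z_{\ge 1} \cdot \dimv B$. The orthogonality relation unpacks to $\Hom_\Lambda(B',X) = 0 = \Hom_\Lambda(X,B')$ for every $X \in \calS$, so the remaining task is just to check that $\calS \cup \{B'\}$ is actually a disjoint union of $|\calS|+1$ pairwise orthogonal bricks, i.e.\ that $B' \not\simeq X$ for any $X \in \calS$. This is forced because $B'$ is a brick and thus satisfies $\Hom_\Lambda(B',B') \ne 0$; an isomorphism $B' \simeq X$ with $X \in \calS$ would then contradict one of the vanishings $\Hom_\Lambda(B',X) = 0$ just established.

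There is no genuine obstacle at this stage: the whole geometric and semi-invariant-theoretic content of the argument has already been absorbed into Proposition \ref{Prop_perp_weak} (via Proposition \ref{Prop_X_B_weak}, the trichotomy of Schur roots, and Proposition \ref{Prop_Schofield}). Thus the proof of Theorem \ref{Thm_main_weak} should amount to the two-step formal reduction described above, essentially verbatim parallel to the short deduction of Theorem \ref{Thm_main} from Proposition \ref{Prop_perp}.
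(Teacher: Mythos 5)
Your proposal is correct and coincides with the paper's own (very brief) deduction: the paper obtains Theorem \ref{Thm_main_weak} from Proposition \ref{Prop_perp_weak} exactly as Theorem \ref{Thm_main} is obtained from Proposition \ref{Prop_perp}, namely by taking $M=N=\bigoplus_{X \in \calS \setminus \{B\}} X$, which is what you do. Your extra remark that $B' \not\simeq X$ for $X \in \calS$ (since $\Hom_\Lambda(B',B') \ne 0$) is a harmless and valid spelling-out of the disjointness.
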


Then Theorem \ref{Thm_open} for path algebras follows 
from Theorem \ref{Thm_main_weak}.
\qed

\section*{Acknowlegments}
The author deeply thanks Yuki Hirano for asking him 
whether Theorem \ref{Thm_open} for path algebras 
related to \cite[Theorem 3.9]{HKO}
holds at MSJ Autumn Meeting 2024 in Osaka and giving him useful comments.
He appreciates the helpful comments of an anonymous referee.
He is grateful also to William Crawley-Boevey, Osamu Iyama,
Kaveh Mousavand and Charles Paquette for improving drafts.
The author was supported by JSPS KAKENHI Grant Number JP23K12957.

\end{document}